\tikzstyle{diedge}=[->,shorten <=1pt,>=angle 90,semithick]
\tikzstyle{forced}=[->,shorten <=1pt,>=angle 90,semithick,dashed]
\tikzstyle{bigbad}=[line width=3pt]
\tikzstyle{every token}=[draw=blue!50,fill=blue!20,thick]
\tikzstyle{vert}=[circle,draw=blue!50,fill=blue!20,thick]
\tikzstyle{arw}=[->,shorten <=1pt,>=angle 90,semithick]
\newcommand{\m}[1]{{\mathbf{\uppercase{#1}}}}
\let\phi\varphi
\newcommand\tupl[1]{\overline{#1}}
\let\subset\subseteq
\def\compEXPTIME{{\textsf{EXPTIME}}}
\def\algA{{\mathbf{A}}}
\def\algB{{\mathbf{B}}}
\def\algF{{\mathbf{F}}}
\DeclareMathOperator\Sg{Sg}
\DeclareMathOperator\HasPath{HasPath}
\DeclareMathOperator\TPG{TPG}
\newcommand\vect[1]{\overline{#1}}
\def\en{{\mathbb N}}
\let\epsilon\varepsilon
\theoremstyle{plain}
\newtheorem{theorem}{Theorem}
\newtheorem{lemma}[theorem]{Lemma}
\newtheorem{corollary}[theorem]{Corollary}
\newtheorem{observation}[theorem]{Observation}
\theoremstyle{definition}
\newtheorem{definition}[theorem]{Definition}
\author{Alexandr Kazda, Matt Valeriote}
\title{Deciding some Maltsev conditions in finite idempotent algebras}
\begin{document}
\maketitle
\section{Introduction}

In this paper we investigate the computational complexity of deciding if a
given finite algebraic structure satisfies a certain type of existential
condition on its set of term operations.  These conditions, known as
\emph{Maltsev conditions}, have played a central role in the classification,
study, and applications of general algebraic structures. Several well studied
properties of equationally defined classes of algebras (also known as varieties
of algebras), such as congruence permutability, distributivity, and modularity
are equivalent to particular Maltsev conditions.

The general set up for the decision problems that we consider in this paper is
as follows: for a fixed Maltsev condition $\Sigma$, an instance of
$\Sigma$-testing is a finite algebraic structure (or just algebra for short)
$\m a$.  The question to decide is if the variety of algebras generated by $\m
a$ satisfies the Maltsev condition $\Sigma$. This is a natural computational
problem in universal algebra (especially in the case of important Maltsev
conditions such as having a majority operation) that finds practical
applications in the UACalc~\cite{uacalc} software system for studying algebras
on a computer.  Moreover, some Maltsev conditions are also known to correspond
to complexity classes of the Constraint Satisfaction Problem (see the recent
survey~\cite{barto-krokhin-willard-how-to} for an overview) and deciding these
Maltsev conditions for algebras of polymorphisms of relational structures is an
important meta-problem (which, however, is beyond the scope of this paper).

It turns out that deciding many common Maltsev conditions for finite algebras
is often $\compEXPTIME$-complete~\cite{freese-valeriote, horowitz-ijac} and so
some of the recent work in this area has focussed on the restriction of these
decision problems to finite idempotent algebras. Idempotent algebras display
rich behavior, while also offering relative comfort when it comes to composing
operations. As two examples of the importance of idempotency, we note that
classical Maltsev conditions that characterize properties of congruence
lattices in varieties are idempotent~\cite{Ho-McK}, and the algebraic
approach to the complexity of the Constraint Satisfaction Problem studies
finite idempotent algebras (although the theory gradually moves towards general
algebras, see~\cite{Wonderland-paper}).

Our goal in this paper is to show that $\Sigma$-testing can be
accomplished in polynomial time when the algebras tested are idempotent and the
(strong) Maltsev condition $\Sigma$ can be described using paths.  While not
all Maltsev conditions are of this form, path Maltsev conditions include many
important conditions such as the Maltsev term, ternary majority, Jónsson terms
or Gumm terms. While the efficient decidability of having the Maltsev or
majority term was known before~\cite{freese-valeriote} as was the efficient
decidability of a chain of $n$ Hagemann-Mitschke
terms~\cite{Valeriote_Willard_2014}, our framework of path Maltsev conditions
unifies these earlier results and shows that it is similarly tractable to
decide if an algebra has a fixed length chain of (classical or directed)
Jónsson or Gumm terms. See Corollary~\ref{corMPpoly} for a summary of the
Maltsev conditions we can work with.

While our framework is quite general, it does not cover all previously known
positive results.  For example, in \cite{horowitz-ijac} it is shown that
testing for the presence of a $k$-edge term, for some fixed $k > 1$, can be
done in polynomial time for finite idempotent algebras.  In
\cite{freese-valeriote} it is shown that testing if a finite idempotent algebra
generates a congruence distributive or congruence modular variety can also be
carried out by a polynomial time algorithm. In contrast, the same paper also
proves that testing for either of these conditions in general algebras is an
$\compEXPTIME$-complete problem.

\section{Preliminaries}
An \emph{algebra} is a structure $\m a = \langle A, f_i (i \in I) \rangle$ consisting of a non-empty set $A$, called the \emph{universe} or \emph{domain} of $\m a$, and a list of finitary operations on $A$, $f_i$, for $i \in I$, for some index set $I$, called the \emph{basic operations} of $\m a$. The \emph{type} of $\m a$ is the $I$-indexed sequence $\langle n_i \mid i \in I\rangle$, where the arity of $f_i$ is $n_i$.  A subset $B$ of $A$ is a \emph{subuniverse} of $\algA$ if it is closed under the basic operations of $\algA$.  For any subset $X$ of $A$ there is a smallest subuniverse of $\algA$, with respect to inclusion, that contains $X$.  This subuniverse is called the \emph{subuniverse of $\algA$ generated by $X$} and is denoted by $\Sg^{\algA}(X)$.

A \emph{variety} of algebras of type $\tau$ is a class of algebras of type $\tau$ that is closed under taking homomorphisms, subalgebras, and direct products.   The \emph{variety generated by $\m a$}, denoted by $V(\m a)$, is the smallest variety of algebras having the same type as $\m a$ that contains $\m a$.  For background material on algebras and varieties, the reader may consult one of \cite{Bu-Sa,bergman-book, alvi}.

As noted in the Introduction, we will primarily be concerned with algebras that are finite (their domains are finite and their lists of basic operations are finite) and idempotent.
An operation $t$ on a set $A$ is \emph{idempotent} if the equation
\[
  t(x,x,\dots,x)\approx x.
\]
holds. An algebra $\algA$ is \emph{idempotent} if each of its basic operations is idempotent.

We will  often be working with tuples. The usual notation for an $n$-tuple of
elements of $A$ will be $\vect a\in A^n$. To concatenate two tuples (or,
more often, an $n$-tuple and a single element of $A$) we will use the notation
$\vect a\vect b$ (resp. $\vect a b$ if $b$ is an element of $A$). We will use
the notation $\hat a$ for a tuple of tuples, i.e.. $\hat c=(\tupl c_1,\dots,\tupl
c_n)$ where each $\tupl c_i$ is a tuple.

An \emph{$n$-ary relation} $R$ over a set $A$ is nothing more than a subset of $A^n$.
We will usually write elements of
relations in columns, as this allows us to apply operations of $\algA$ to
$n$-tuples of elements of $A$: Given $\vect r_1,\dots,\vect r_k\in A^n$ and a
$k$-ary operation $f\colon A^k\to A$, the $n$-tuple $f(\vect r_1,\dots,\vect r_k)$ is
the result of applying $f$ to the rows of the matrix with columns $\vect
r_1,\dots,\vect r_k$.
A relation $R$ is \emph{invariant} under $\algA$ if whenever $k\in\en$, $f$
is a $k$-ary operation of $\algA$ and $\vect r_1,\vect r_2,\dots,\vect r_k\in
R$ we have $f(\vect r_1,\dots,\vect r_k)\in R$. A relation $R\leq \prod_{i=1}^n B_i$
is \emph{subdirect} in the product if the $i$-th projection of $R$ is equal to $B_i$ for each $i$.

A good part of our exposition concerns directed graphs, or digraphs. A \emph{digraph} $G$ on the set
of vertices $V(G)$ is a relational structure with one binary relation $E(G)$. We
allow loops in our digraphs and, because our digraphs' edges will have labels on
them, we allow multiple edges between a given pair of vertices.

Graphically, to denote that
 $(u,v)\in E(G)$ for vertices $u$ and $v$, we draw an arrow (oriented edge) from $u$ to $v$. We
say that there is an \emph{oriented walk} from $u\in V(G)$ to $v\in V(G)$ if there is a
sequence of vertices $u=w_0,w_1,w_2,\dots,w_n=v$ such that for all $i$ there is
an edge from $w_i$ to $w_{i+1}$ or from $w_{i+1}$ to $w_{i}$. A walk is
\emph{directed} if for all $i$ the edge is always from $w_i$ to $w_{i+1}$. An
oriented walk where the vertices $w_0,\dots,w_n$ are pairwise distinct is an
\emph{oriented path}. If $P$ is an oriented path with vertices $p_0,\dots,p_n$,
then a \emph{prefix} of $P$ is any oriented path $p_0,\dots,p_k$  and
a \emph{suffix} of $P$ is any oriented path $p_k,\dots,p_n$ where $0\leq k\leq n$.
A \emph{cycle of length $n$} is a sequence of
vertices $w_0,w_1,\dots,w_n$ such that $w_0=w_n$ and for each $i$ we have an
edge from $w_i$ to $w_{i+1}$.

For the purposes of this paper, a
\emph{strong Maltsev condition} $\Sigma$ is a condition of the form: there are some operations $d_1,d_2, \dots, d_k$ that satisfy some set of equations $\Sigma$ involving the $d_i$.
 For example
\begin{equation}\label{MaltsevExample}
    p(p(x,y),r(x))\approx y
\end{equation}
is a strong Maltsev condition involving a binary operation $p(x,y)$ and a unary operation $r(x)$.  An algebra $\m a$ satisfies a strong Maltsev condition $\Sigma$ if for each operation  $s$ that appears in $\Sigma$ there is a term operation $t_s$ of $\m a$ having the same arity as $s$ such that the collection of operations $t_s$ on $\m a$ satisfies the equations of $\Sigma$.

Any Abelian group $\m g = \langle G, x\cdot y, x^{-1}, e\rangle$ satisfies the above strong Maltsev condition, since the term operations $p(x,y) = x\cdot y$ and $r(x) = x^{-1}$ satisfy (\ref{MaltsevExample}).  In contrast, it can be checked that no non-trivial semilattice can satisfy this condition.

A strong Maltsev condition is \emph{linear} if none of its equations involve the composition of operations.  It is said to be \emph{idempotent} if its equations imply that each of the operations involved in it are idempotent.
An example of a strong linear idempotent Maltsev condition is that of having a Maltsev term, i.e., a ternary term $p$ that satisfies the equations $p(y,x,x) \approx y$ and $p(x,x,y) \approx y$.
A more thorough discussion of Maltsev conditions can be found in \cite{Garcia-Taylor}.

\section{Path Maltsev conditions}
In this section, we will show how to express several
classical Maltsev conditions using paths and how to efficiently decide them in finite idempotent algebras by checking that they hold locally.
The proof that one can go from operations that locally satisfy a given path
Maltsev condition in a finite idempotent algebra to operations that satisfy it globally will be presented in the next section.
\subsection{Pattern digraphs}
Inspired by the work of Libor Barto and Marcin Kozik \cite{barto-kozik-boundedwidth-ACM}, we will represent some special Maltsev conditions as paths.
To do this in a systematic way, we will need to introduce digraphs whose edges
carry additional information.

\begin{definition}
A \emph{pattern digraph} is a directed graph with two kinds of edges: Solid and
dashed. Additionally, each pattern digraph has two special distinguished vertices: The initial vertex $s$ and the
terminal vertex $t$ (see Figure~\ref{figPartLabelled}). We allow multiple edges between pairs of vertices of pattern digraphs.
\end{definition}

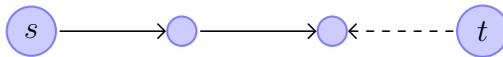
\begin{figure}
  \begin{center}
    \begin{tikzpicture} [node distance=2cm]
      \node[vert](v1){$s$};
      \node[vert, right of=v1](v2){};
      \node[vert, right of=v2](v3){};
      \node[vert, right of=v3](v4){$t$};
      \draw[arw] (v1)--(v2);
      \draw[arw] (v2)--(v3);
      \draw[arw, dashed] (v4)--(v3);
    \end{tikzpicture}
    \caption{An example of a pattern digraph.}\label{figPartLabelled}
  \end{center}
\end{figure}

If $G$ is a pattern digraph, then $H$ is a \emph{subgraph} of $G$ if the set of
vertices, dashed and solid edges of $H$ is the subset of the set of vertices,
dashed and solid edges, respectively of $G$. Additionally, any subgraph of $G$
must have the same initial and terminal vertices as $G$.

A \emph{pattern path} from $s$ to $t$ is a pattern digraph $P$ with initial vertex
$s$ and terminal vertex $t$ such that $P$ viewed as a digraph is an oriented path with
endpoints $s$ and $t$. The length of a pattern path is the number of its edges.

If $G$ and $H$ are pattern digraphs, then a \emph{pattern digraph homomorphism}
(often shortened  to just ``homomorphism'' in the rest of this paper) from $G$ to
$H$ is a mapping $f\colon G\to H$ which is a digraph homomorphism that sends all
solid edges of $G$ to solid edges of $H$.
(Note that  dashed edges of $G$ may be mapped to dashed or solid edges of $H$.)
 We do not require that a pattern digraph homomorphism maps the initial and terminal vertices of its domain to their counterparts in its co-domain. If $G$ is a pattern digraph with
initial vertex $s$ and terminal vertex $t$, and $H$ is a pattern digraph, then
we say that there is a \emph{$G$-shaped walk from $u$ to $v$ in $H$} if there is a
pattern digraph homomorphism $f\colon G\to H$ such that $f(s)=u$ and $f(t)=v$.

We define isomorphisms in the standard way: If $G$ and $H$ are two pattern
digraphs, we say that $G$ and $H$ are isomorphic if and only if there is a
bijection $b\colon G\to H$ such that both $b$ and $b^{-1}$ are pattern digraph
homomorphisms that send initial vertices to initial vertices and terminal vertices
to terminal vertices.

If $G$ and $H$ are pattern digraphs with initial vertices $s_1$ and $s_2$ and
terminal vertices $t_1$ and $t_2$ respectively, then their product $G\times H$ is the pattern
digraph with the vertex set $V(G)\times V(H)$ (of which $(s_1,s_2)$ is the
initial and $(t_1,t_2)$ the terminal vertex). $G\times H$ has an edge from
$(v_1,v_2)$ to $(u_1,u_2)$ if and only if $(v_1,u_1)\in E(G)$ and $(v_2,u_2)\in
E(H)$. The edge $((v_1,v_2),(u_1,u_2))$ is solid if and only if both edges
$(v_1,u_1)$ and $(v_2,u_2)$ are solid; otherwise it is dashed (see
Figure~\ref{figCMtermsProduct}).

\begin{figure}
  \begin{center}
    \begin{tikzpicture} [node distance=2cm]
    \node (times) at (0,0) {$\times$};
    \node [vert, label=$s_1$] (x)at (0,-2) {} ;
    \node [vert, label=below:$t_1$] (y)at (0,-4) {} ;
    \draw[arw,dashed] (x)--(y);
    \draw[arw] (x) edge[loop left] (x);
    \draw[arw] (y) edge[loop left] (y);

    \node[vert, label=$s_2$](v1) at (2,0) {};
      \node[vert, right of=v1](v2){};
      \node[vert, right of=v2](v3){};
      \node[vert, right of=v3,label=$t_2$](v4){};
      \draw[arw] (v1)--(v2);
      \draw[arw] (v2)--(v3);
      \draw[arw, dashed] (v4)--(v3);
    \foreach \i in {2,4,6,8}
       \foreach \j in {-2,-4}
       \node[vert] (p\i\j) at (\i,\j){};
    \foreach \j/\k/\styl in {-2/-2/,-4/-4/,-2/-4/dashed}{
      \draw[arw,\styl] (p2\j)--(p4\k);
      \draw[arw,\styl] (p4\j)--(p6\k);
      \draw[arw,dashed] (p8\j)--(p6\k);
    }
    \node[label=above:{$(s_1,s_2)$}] at (2,-2) {};
    \node[label=below:{$(t_1,t_2)$}] at (8,-4) {};

    \end{tikzpicture}
    \caption{A product of two pattern digraphs.}\label{figCMtermsProduct}
  \end{center}
\end{figure}
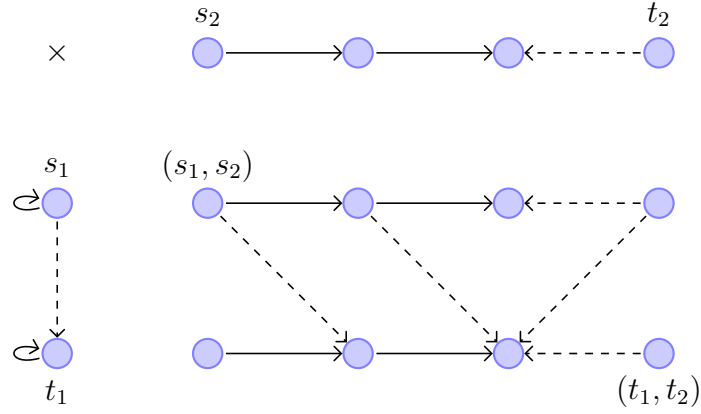

Let $\algA$ be an algebra and let $\algF$ be the 2-generated free algebra in
$V(\algA)$ with generators $x,y$.
The elements of $\algF$ can be represented as binary term operations $b(x,y)$ of the algebra $\algA$ in the variables $x$ and $y$, and from this perspective can be regarded as members of $A^{|A|^2}$, i.e., as $|A|^2$-tuples over $A$.  Under this representation the generators $x$ and $y$ correspond to the two binary projection functions $\pi_0(x,y) = x$ and $\pi_1(x,y) = y$, respectively, and the universe of $\algF$ is the subuniverse of $\algA^{|A|^2}$ generated by $\pi_0$ and $\pi_1$.

 Consider  the following subalgebra  of $\algF^3$:
\[
  K(\algA)=\Sg^{\algF^3}\left(\left\{\begin{pmatrix} x\\ x\\ x\end{pmatrix},
  \begin{pmatrix} y\\x\\y\end{pmatrix},
\begin{pmatrix}x\\y\\y\end{pmatrix} \right\}
  \right)
\]
Most of the time, we shall view $K(\algA)$ as a pattern digraph on the set
$\algF$ by treating each $(a,b,c)\in K(\algA)$ as an edge from $b$ to $c$. If
$a=x$, we declare the edge $(b,c)$ to be solid, otherwise it will be dashed.
The initial and terminal vertices of the pattern digraph $K(\algA)$ are
$x$ and
$y$, respectively.

If we look at the generators of $K(\algA)$ only and view them as a pattern digraph
with initial vertex $x$ and terminal vertex $y$, we will get a digraph that has
two vertices, two solid loops and one dashed edge (see
Figure~\ref{figGenerators}). We will call this digraph $J$.

\begin{figure}
  \begin{center}
    \begin{tikzpicture} [node distance=2cm]
      \node [vert, label=$x$]    (x) {};
      \node [vert, right of= x,label=$y$] (y) {};
      \draw[arw,dashed] (x)--(y);
      \draw[arw] (x) edge[loop below] (x);
      \draw[arw] (y) edge[loop below] (y);
    \end{tikzpicture}
  \end{center}
  \caption{The pattern digraph $J$.}\label{figGenerators}
\end{figure}
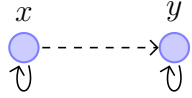

While most of the time we only have to distinguish solid and dashed edges,
sometimes this approach is too coarse. For example, when we
view $K(\algA)$ as a pattern digraph, we are losing information about the first
coordinates of members of $K(\algA)$. This is why we will sometimes be talking about labelled digraphs
instead of pattern digraphs. A \emph{labelled digraph} is a directed graph $G$
together with a function $f\colon E(G)\to L$ that assigns a label from the set
$L$ to each edge of $G$. We will again allow multiple edges between two vertices.

\subsection{From paths to Maltsev conditions}
Let $P$ be a pattern path from $0$ to $n$ with vertex set $\{0,1,2,\dots,n\}$, and let
$\algA$ be an algebra. When does there exist a $P$-shaped walk from
$x$ to $y$ in the pattern digraph $K(\algA)$? We will show that such a path
exists if and only if $\algA$ satisfies a specific strong linear Maltsev condition
$M(P)$.

Given $P$, we will construct $M(P)$ as follows: For each
$i\in\{0,1,\dots,n\}$, take a binary operation symbol $s_i(x,y)$, and for each
$i\in[n] = \{1, 2, \dots, n\}$, take a ternary operation symbol $t_i(x,y,z)$. Start with the equations $s_0(x,y)\approx x$ and $s_n(x,y)\approx y$.
The equations of $M(P)$ that connect the operation symbols $s_i$ and $t_i$ depend on the nature of the edge between $i-1$ and $i$ in $P$: If the edge has the form $(i-1,i)$ (what we call a forward edge), add
to $M(P)$ the pair of equations
\begin{align*}
  t_{i}(x,x,y)&\approx s_{i-1}(x,y)\\
  t_i(x,y,y)&\approx s_i(x,y),
\end{align*}
while if the $i$-th edge of $P$ is a backward edge $(i,i-1)$, we add to $M(P)$
the pair
\begin{align*}
  t_i(x,x,y)&\approx s_i(x,y)\\
  t_{i}(x,y,y)&\approx s_{i-1}(x,y).
\end{align*}
Moreover, whenever the $i$-th edge of $P$ is solid, we add to $M(P)$ the equation
$t_i(x,y,x)\approx x$.

Looking at $M(P)$, we see that it is a strong, linear Maltsev condition.
Observe also that all the terms $t_i$ and $s_i$ have to be idempotent, since if we
set $x=y$, the chain gives us that all of the $t_i(x,x,x)$ and $s_i(x,x)$ are equal to
each other and to $x$.

\begin{observation}
  Let $\algA$ be an algebra and $P$ be a pattern path. Then the idempotent, strong, linear Maltsev condition $M(P)$ is satisfied by $\algA$ if and only if there is a $P$-shaped walk from $x$ to $y$ in $K(\algA)$.
\end{observation}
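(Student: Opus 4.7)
The plan rests on two correspondences. Since $\algF$ is free on $\{x, y\}$ in $V(\algA)$, each element of $\algF$ is of the form $s^\algF(x, y)$ for some binary term $s$, and any equality in $\algF$ translates to an identity of $V(\algA)$, hence to an equation holding for the corresponding term operations on $\algA$. Also, because $K(\algA) \leq \algF^3$ is generated by the three triples displayed in its definition, every element $(a, b, c) \in K(\algA)$ can be written as $(t^\algF(x,y,x),\, t^\algF(x,x,y),\, t^\algF(x,y,y))$ for some ternary term $t$.

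For the forward implication, assume $\algA$ realizes $M(P)$ via term operations $s_i$ and $t_i$. Set $u_i = s_i^\algF(x, y)$; the boundary equations of $M(P)$ give $u_0 = x$ and $u_n = y$. For each $i \in [n]$, the triple $(t_i^\algF(x,y,x),\, t_i^\algF(x,x,y),\, t_i^\algF(x,y,y))$ lies in $K(\algA)$ and so corresponds to an edge in the pattern digraph from its second to its third coordinate. A direct case analysis using the $M(P)$ equations shows that this edge runs from $u_{i-1}$ to $u_i$ when the $i$-th edge of $P$ is forward and from $u_i$ to $u_{i-1}$ when it is backward, and that its first coordinate equals $x$ (making the edge solid) precisely when the $i$-th edge of $P$ is solid. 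Hence $i \mapsto u_i$ is a pattern digraph homomorphism $P \to K(\algA)$ sending $0$ to $x$ and $n$ to $y$.

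For the converse, let $f \colon P \to K(\algA)$ witness a $P$-shaped walk from $x$ to $y$. Put $u_i = f(i)$ and choose binary terms $s_i$ with $s_i^\algF(x, y) = u_i$, taking $s_0$ and $s_n$ to be the two projections so that the boundary equations of $M(P)$ hold literally. For each $i \in [n]$, the edge in $K(\algA)$ prescribed by $f$ gives a ternary term $t_i$ whose evaluations at $(x,y,x)$, $(x,x,y)$ and $(x,y,y)$ in $\algF$ equal the three coordinates of the corresponding member of $K(\algA)$. Matching the second and third coordinates with $u_{i-1}$ and $u_i$ according to the forward/backward orientation of the $i$-th edge yields the two prescribed $t_i$-equations of $M(P)$; if the edge is solid, the first coordinate is forced to be $x$, producing the extra identity $t_i(x, y, x) \approx x$. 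Since every such equality holds in $\algF$, the universal property of the free algebra promotes it to an identity of $V(\algA)$, hence to an equation satisfied by the term operations of $\algA$.

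The only mildly finicky point is the sign convention that swaps the roles of $s_{i-1}$ and $s_i$ in $M(P)$ depending on whether the $i$-th edge of $P$ is forward or backward; apart from that bookkeeping, both directions of the equivalence are a direct unwinding of definitions.
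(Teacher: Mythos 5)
Your proposal is correct and follows essentially the same route as the paper: in the forward direction you apply each $t_i$ coordinatewise to the three generators of $K(\algA)$ and read off the edge between $s_{i-1}(x,y)$ and $s_i(x,y)$ from the $M(P)$ equations, and in the converse you represent each edge of the walk by a ternary term acting on the generators and extract the required identities (the paper merely leaves the free-algebra/universal-property bookkeeping implicit). No gaps.
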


\begin{proof}
  Suppose that $\algA$ satisfies $M(P)$, as witnessed by the terms $s_i(x,y)$, for $0 \le i \le n$ and $t_i(x,y,z)$ for $1 \le i \le n$.  We claim that then $s_0,s_1,\dots,s_n$ is a $P$-shaped walk from $x$ to $y$ in $K(\algA)$. Obviously, $s_0(x,y)=x$ and $s_n(x,y)=y$. To see that the edge between $s_{i-1}$ and $s_i$ is of the right type, one needs to consider four cases, of which we will only do one in detail: Assume that $(i-1,i)$ is a solid forward edge of $P$. Then
\[
t_i\left(\begin{pmatrix} x\\ x\\ x\end{pmatrix},
  \begin{pmatrix} y\\x\\y\end{pmatrix},
\begin{pmatrix}x\\y\\y\end{pmatrix}\right)=\begin{pmatrix}t_i(x,y,x)\\t_i(x,x,y)\\t_i(x,y,y)\end{pmatrix}
\]
lies in $K(\algA)$. Using the equations of $M(P)$ involving $t_i$, $s_{i-1}$,
and $s_i$, we immediately see that $(s_{i-1}(x,y), s_i(x,y))$ is a solid edge
of $K(\algA)$ and we are done.

Conversely, suppose that $f_0,f_1,\dots,f_n$ is a $P$-shaped walk from $x$ to $y$ in $K(\algA)$.
We will show how to get the terms $t_i, s_i$ from forward
edges of $P$; the construction for backward edges is similar. If $(i-1, i)$ is an
edge of $P$ then $(f_{i-1}, f_i)$  is an edge of $K(\algA)$ and so there is
some ternary term $g_i(x,y,z)$ of $\algA$ such that $g_i(x,x,y) = f_{i-1}$ and
$g_i(x,y,y) = f_i$. The way to satisfy $M(P)$ is then to let
$t_i(x,y,z)$ to be $g_i(x,y,z)$, $s_{i-1}(x,y)$ to be $f_{i-1}$, and
$s_i(x,y)$ to be $f_i$. Since $f_0=x$ and $f_n=y$,
the equations $s_0\approx x$ and $s_n\approx y$ are satisfied. Finally, if $(i-1,i)$ is a solid
edge, then $(f_{i-1}, f_{i})$ must be solid as well, and thus we can demand that $g_i(x,y,x) = x$, satisfying the corresponding condition in $M(P)$.
\end{proof}

\subsection{Example gallery}
To illustrate the connection between a pattern path $P$ and the associated Maltsev condition $M(P)$, we present some well known
Maltsev conditions as $M(P)$ for some pattern paths $P$. Compare the paths in the pictures with
the set of generators of $K(\algA)$ as shown in Figure~\ref{figGenerators}.
To save space, we will replace $s_0$ by $x$ and $s_n$ by $y$ in the equations.

\subsubsection{The trivial case}
If $P$ contains any dashed forward edges $(i,i+1)$ (see Figure~\ref{figTrivial}) then the condition $M(P)$ will
be trivially satisfied by all algebras; one needs only to set $t_j(x,y,z)=x$
for $j< i$, $t_i(x,y,z)  = y$, and $t_j(x,y,z)=z$ for $j>i$.

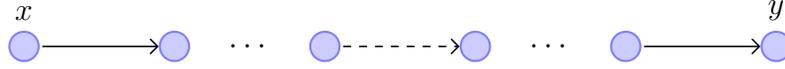
\begin{figure}
  \begin{center}
    \begin{tikzpicture} [node distance=2cm]
      \node [vert, label=$x$]    (a0) {};
      \node [vert, right of =a0] (b1) {};
      \node [vert, right of =b1] (a1) {};
      \node [vert, right of =a1] (b2) {};
      \node [vert, right of= b2]             (bn)  {};
      \node [vert, right of= bn,label=$y$] (bn1) {};
      \coordinate (temp1) at ($(b2)!0.5!(bn)$);
      \node at (temp1) {$\cdots$};
      \coordinate (temp1) at ($(a1)!0.5!(b1)$);
      \node at (temp1) {$\cdots$};
      \foreach \a/\b in {b1/a0,bn1/bn} {
	\draw[arw] (\b) -- (\a);
      }
      \foreach \a/\b in {b2/a1} {
	\draw[arw,dashed] (\b) -- (\a);
      }
    \end{tikzpicture}
  \end{center}
  \caption{A path $P$ yielding trivial $M(P)$.}\label{figTrivial}
\end{figure}

\subsubsection{Maltsev term}
By the classic result due to Maltsev \cite{maltsev}, a variety is congruence permutable if and only if it has a ternary term $t_1(x,y,z)$ that satisfies the equations
\begin{align*}
  x&\approx t_1(x,y,y)\\
  t_1(x,x,y)&\approx y
\end{align*}
 This strong Maltsev condition is equivalent to $M(P)$ for the path $P$ pictured in Figure~\ref{figConPerm} that consists of a single dashed
backward edge $(1,0)$.

\begin{figure}
  \begin{center}
\begin{tikzpicture} [node distance=2cm]
  \node [vert, label=$x$]    (x) {};
  \node [vert, right of= x,label=$y$] (y) {};

  \draw[arw,dashed] (y)--(x);
\end{tikzpicture}
\end{center}
\caption{Maltsev term.}\label{figConPerm}
\end{figure}
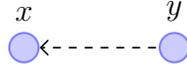

\subsubsection{Majority}
The path $P$ that gives rise to a  majority term consists of a single solid forward edge (see Figure~\ref{figMajority}). The equations that define $M(P)$ are:
\begin{figure}
  \begin{center}
\begin{tikzpicture} [node distance=2cm]
  \node [vert, label=$x$]    (x) {};
  \node [vert, right of= x,label=$y$] (y) {};

  \draw[arw] (x)--(y);
\end{tikzpicture}
\end{center}
\caption{Majority.}\label{figMajority}
\end{figure}
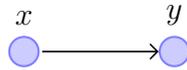
\begin{align*}
  x&\approx t_1(x,x,y)\\
  t_1(x,y,y)&\approx y\\
  t_1(x,y,x)&\approx x.
\end{align*}

\subsubsection{Chain of $n$ J\'onsson terms}\label{subJonsson}
A fence is a sequence of
vertices $v_0,\dots,v_i$ such that $(v_i,v_{i+1})\in E(G)$ for $i$ even and
$(v_{i+1},v_i)\in E(G)$ for $i$ odd.
Classic J\'onsson terms for congruence distributivity \cite{jonsson} arise from  a fence $P$ of $n$ solid edges, for some $n \ge 1$, from $x$ to $y$ that starts with a forward edge. The final edge's direction depends on the parity of $n$;
in the picture (Figure~\ref{figCD2kand1Pattern}),  $n$ is taken to be odd.

The corresponding condition $M(P)$ is:
\begin{align*}
  x&\approx t_1(x,x,y)\\
  t_i(x,y,y)&\approx t_{i+1}(x,y,y)\quad\text{for $1 \le i<n$ odd,}\\
  t_i(x,x,y)&\approx t_{i+1}(x,x,y)\quad\text{for $1 \le i<n$ even,}\\
  t_n(x,y,y)&\approx y\quad\text{for $n$ odd,}\\
  t_n(x,x,y)&\approx y\quad\text{for $n$ even,}\\
  t_i(x,y,x)&\approx x\quad\text{for all $i$.}
\end{align*}
Note that having a single J\'onsson term is the same thing as having a majority term.

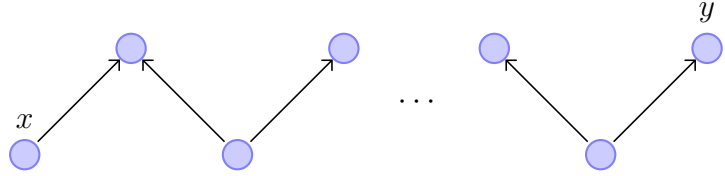
\begin{figure}
  \begin{center}
\begin{tikzpicture} [node distance=2cm]
  \node [vert, label=$x$]    (a0) {};
  \node [vert, above right of =a0] (b1) {};
  \node [vert, below right of =b1] (a1) {};
  \node [vert, above right of =a1] (b2) {};

  \node [vert, right of= b2]             (bn)  {};
  \node [vert, below right of= bn] (an)  {};
  \node [vert, above right of= an,label=$y$] (bn1) {};

  \coordinate (temp1) at ($(a0)!0.5!(b1)$);
  \coordinate (temp2) at ($(b2)!0.5!(bn)$);
  \node at (temp1-|temp2) {$\cdots$};

  \foreach \a/\b in {a0/b1,a1/b1,a1/b2,an/bn,an/bn1} {
    \draw[arw] (\a) -- (\b);
  }
\end{tikzpicture}\caption{A chain of an odd number of J\'onsson terms.} \label{figCD2kand1Pattern}
\end{center}
\end{figure}

\subsubsection{Chain of $n+1$ Gumm terms}
A variety will be congruence modular if and only if it has a finite sequence of Gumm terms \cite{gumm-memoirs}.
These terms are similar to  J\'onsson terms, except that  the last edge ($(n+1)$-st in
our counting) is a backward dashed edge. We note that our formalism
does not capture Day terms \cite{day-terms}, another  chain of terms that captures congruence modularity,
because Day terms have arity four.

Gumm terms are given by the condition $M(P)$, for $P$ the path pictured in Figure~\ref{figGumm}. We denote this condition by CM($n$).
\begin{align*}
  x&\approx t_1(x,x,y)\\
  t_i(x,y,y)&\approx t_{i+1}(x,y,y)\quad\text{for $1 \le i< n$ odd,}\\
  t_i(x,x,y)&\approx t_{i+1}(x,x,y)\quad\text{for $1 \le i<n$ even,}\\
  t_n(x,y,y)&\approx t_{n+1}(x,y,y) \quad\text{for $n$ odd,}\\
  t_n(x,x,y)&\approx t_{n+1}(x,y,y) \quad\text{for $n$ even,}\\
  t_{n+1}(x,x,y)&\approx y\\
  t_i(x,y,x)&\approx x\quad\text{for all $i\leq n$.}
\end{align*}
\begin{figure}
  \begin{center}
\begin{tikzpicture} [node distance=2cm]
  \node [vert, label=$x$]    (a0) {};
  \node [vert, above right of =a0] (b1) {};
  \node [vert, below right of =b1] (a1) {};
  \node [vert, above right of =a1] (b2) {};

  \node [vert, right of= b2]             (bn)  {};
  \node [vert, below right of= bn] (an)  {};
  \node [vert, above right of= an,label=$y$] (bn1) {};

  \coordinate (temp1) at ($(a0)!0.5!(b1)$);
  \coordinate (temp2) at ($(b2)!0.5!(bn)$);
  \node at (temp1-|temp2) {$\cdots$};

  \foreach \a/\b in {a0/b1,a1/b1,a1/b2,an/bn} {
    \draw[arw] (\a) -- (\b);
  }
  \draw[arw,dashed] (bn1)--(an);
\end{tikzpicture} \caption{CM($2k$)}\label{figGumm}
\end{center}
\end{figure}
\subsubsection{Chain of $n$ directed J\'onsson terms}
Directed J\'onsson terms are a variation of those presented in Subsection~\ref{subJonsson} and also can be used to
 characterize congruence distributivity for varieties.
See~\cite{DirectedJonsson} for more details about these terms. The condition $M(P)$ that arises from the path pictured in Figure~\ref{figDirectedJonsson} provides a sequence of directed J\'onsson terms:
\begin{align*}
  x&\approx t_1(x,x,y)\\
  t_i(x,y,y)&\approx t_{i+1}(x,x,y)\quad\text{for $1 \le i < n$} \\
  t_n(x,y,y)&\approx y\\
  t_i(x,y,x)&\approx x\quad\text{for all $i$.}
\end{align*}
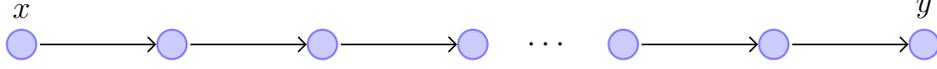
\begin{figure}
  \begin{center}
\begin{tikzpicture} [node distance=2cm]
  \node [vert, label=$x$]    (a0) {};
  \node [vert, right of =a0] (b1) {};
  \node [vert, right of =b1] (a1) {};
  \node [vert, right of =a1] (b2) {};

  \node [vert, right of= b2]             (bn)  {};
  \node [vert, right of= bn] (an)  {};
  \node [vert, right of= an,label=$y$] (bn1) {};

  \coordinate (temp1) at ($(a0)!0.5!(b1)$);
  \coordinate (temp2) at ($(b2)!0.5!(bn)$);
  \node at (temp1-|temp2) {$\cdots$};

  \foreach \a/\b in {a0/b1,b1/a1,a1/b2,bn/an,an/bn1} {
    \draw[arw] (\a) -- (\b);
  }
\end{tikzpicture}
\end{center}
\caption{Directed J\'onsson chain of length $n$}\label{figDirectedJonsson}
\end{figure}

\subsubsection{Chain of directed Gumm terms of length $n+1$}
In a similar manner, one can consider the directed version of  Gumm terms (see~\cite{DirectedJonsson}).
The corresponding path $P$ is pictured in Figure~\ref{figDirectedGumm} and the Maltsev condition $M(P)$ is given by:
\begin{align*}
  x&\approx t_1(x,x,y)\\
  t_i(x,y,y)&\approx t_{i+1}(x,x,y)\quad\text{for $1 \le i < n$} \\
  t_n(x,y,y)&\approx t_{n+1}(x,y,y)\\
  t_{n+1}(x,x,y)&\approx y\\
  t_i(x,y,x)&\approx x\quad\text{for all $i$.}
\end{align*}

\begin{figure}
\begin{tikzpicture} [node distance=2cm]
  \node [vert, label=$x$]    (a0) {};
  \node [vert, right of =a0] (b1) {};
  \node [vert, right of =b1] (a1) {};
  \node [vert, right of =a1] (b2) {};

  \node [vert, right of= b2]             (bn)  {};
  \node [vert, right of= bn] (an)  {};
  \node [vert, right of= an,label=$y$] (bn1) {};

  \coordinate (temp1) at ($(a0)!0.5!(b1)$);
  \coordinate (temp2) at ($(b2)!0.5!(bn)$);
  \node at (temp1-|temp2) {$\cdots$};

  \foreach \a/\b in {a0/b1,b1/a1,a1/b2,bn/an} {
    \draw[arw] (\a) -- (\b);
  }

  \draw[arw,dashed] (bn1)--(an);
\end{tikzpicture}
\caption{Directed Gumm chain of length $n+1$}\label{figDirectedGumm}
\end{figure}
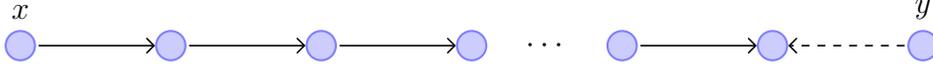
\subsubsection{Chain of $n$ Hagemann-Mitschke terms}
Hagemann-Mitschke terms can be used to characterize varieties that are congruence $(n+1)$-permutable, for a given natural number $n\ge  1$ \cite{Hag-Mit}. The strong Maltsev condition corresponding to this property is given by $M(P)$ for the path $P$ pictured in Figure~\ref{fignperm}:
\begin{align*}
  x&\approx t_1(x,y,y)\\
  t_i(x,x,y)&\approx t_{i+1}(x,y,y)\quad\text{for $1 \le i < n$}  \\
  t_n(x,x,y)&\approx y
\end{align*}

\begin{figure}\begin{tikzpicture} [node distance=2cm]
  \node [vert, label=$x$]    (a0) {};
  \node [vert, right of =a0] (b1) {};
  \node [vert, right of =b1] (a1) {};
  \node [vert, right of =a1] (b2) {};

  \node [vert, right of= b2]             (bn)  {};
  \node [vert, right of= bn] (an)  {};
  \node [vert, right of= an,label=$y$] (bn1) {};

  \coordinate (temp1) at ($(a0)!0.5!(b1)$);
  \coordinate (temp2) at ($(b2)!0.5!(bn)$);
  \node at (temp1-|temp2) {$\cdots$};

  \foreach \a/\b in {a0/b1,b1/a1,a1/b2,bn/an,an/bn1} {
    \draw[arw,dashed] (\b) -- (\a);
  }
\end{tikzpicture}
\caption{Hagemann-Mitschke terms ($n$-permutability)}\label{fignperm}
\end{figure}
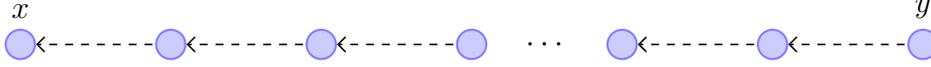

\subsection{The $\HasPath_P$ property}
For a given finite algebra $\algA$, we would like to decide if there is a
$P$-shaped walk from $x$ to $y$ in $K(\algA)$ and hence if $\algA$ satisfies the Maltsev condition $M(P)$. It turns out that for each fixed $P$ there is a polynomial time algorithm
that decides this question as long as $\algA$ is idempotent.
In the rest of this paper, we will assume that $P$ is a pattern path from 0 to $n$ with vertex set $\{0,1,\dots,n\}$ and that $P$ has no dashed forward edges (for else $M(P)$ would be trivial).

As noted earlier, we regard the free algebra $\algF$ as a subuniverse of $\algA^{|A|^2}$. Therefore, the
labelled graph $K(\algA)$ is a subuniverse of the $3|A|^2$-th power of $\algA$
and most likely too large to be searched directly. Our goal in the following is
to approximate $P$-shaped walks in $K(\algA)$ using lower powers of $\algA$.

One issue with our approach is that we need to approximate the images of
different members of $P$ by different subpowers of $\algA$. To facilitate this,
we will use products.

Consider the pattern digraph $K(\algA)\times P$. The vertex set of this pattern digraph
consists of $n+1$ sets of the form $K(\algA)\times\{i\}$, each of which will be
easy to approximate using subpowers of $\algA$. Moreover, it is elementary to show that
there is a $P$-shaped walk from $x$ to $y$ in $K(\algA)$ if and only if there
exists a $P$-shaped walk from $(x,0)$ to $(y,n)$ in $K(\algA)\times P$ (and that each such walk
necessarily sends the $i$-th vertex of $P$ to a pair in $K(\algA)\times \{i\}$).

As an example, consider Figure~\ref{figCMtermsProduct}. The product in this
picture is in fact $J\times P$ for the pattern path $P$ that corresponds to a
chain of $2+1$ directed Gumm terms and $J$ the pattern digraph that records the generators of $K(\algA)$.
To obtain $K(\algA)\times P$,
one has to close the set of edges between $J\times \{i\}$ and $J \times \{i+1\}$
under the operations of $\algA$. This gives us a hint on how to approximate
$K(\algA)\times P$ by smaller digraphs and we formalize this idea in the notion
of testing pattern digraphs:


\begin{definition}\label{defTesting} Let $\algA$ be an algebra and $P$ a
  pattern path of length $n$. Let $(m_0,\dots,m_n;p_1,\dots,p_n)$ be a tuple of natural numbers, let
  \begin{align*}
    (\tupl a_0,\tupl a_1,\dots, \tupl a_n)&=\hat{a}, &(\tupl c_1,\dots,\tupl c_n)&=\hat c,\\
    (\tupl b_0,\tupl b_1,\dots, \tupl b_n)&=\hat{b}, &(\tupl d_1,\dots,\tupl d_n)&=\hat d,
  \end{align*}
  be tuples of tuples of members of $A$ such that $\tupl a_i,\tupl b_i$ are $m_i$-ary and $\tupl c_i,\tupl d_i$ are $p_i$-ary for all applicable values of $i$.

  The \emph{testing $(m_0,\dots,m_n;p_1,\dots,p_n)$-ary pattern digraph for
  $\algA$ and $P$ generated by $(\hat{a}, \hat{b}, \hat{c}, \hat{d})$}, denoted by $\TPG(\hat{a},\hat{b},\hat{c},\hat{d})$, has  vertex set consisting of a disjoint union of sets (which, after
  Barto and Kozik \cite{barto-kozik-boundedwidth-ACM}, we will call potatoes)
  $B_i=\Sg^{\algA^{m_i}}(\{\tupl a_i,\tupl b_i\})$, where $i=0,1,\dots,n$. The vertices $\tupl a_0$ and $\tupl b_n$ are  the initial and terminal vertices, respectively, of $\TPG(\hat{a},\hat{b},\hat{c},\hat{d})$.

  To obtain the edge set of $G = \TPG(\hat{a},\hat{b},\hat{c},\hat d)$, first
  generate, for $i=1,\dots,n$, the labeled edge sets $E_i$ as follows:
  \begin{itemize}
    \item If the $i$-th edge of $P$ is a forward edge, then
      \[
      E_i=\Sg^{\algA^{p_i+m_{i-1}+m_{i}}}\left(\left\{\begin{pmatrix} \tupl c_i\\ \tupl a_{i-1}\\ \tupl a_i\end{pmatrix},
	    \begin{pmatrix}\tupl d_i \\\tupl a_{i-1}\\\tupl b_i\end{pmatrix},
	  \begin{pmatrix}\tupl c_i\\\tupl b_{i-1}\\\tupl b_i\end{pmatrix} \right\}
  \right)	  \leq \algA^{p_i}\times B_{i-1}\times B_i
      \]
    \item If the $i$-th edge of $P$ is a backward edge, then swap $B_{i-1}$ and $B_i$ (as well as the corresponding generators) in the above definition, i.e.
   \[
   E_i=\Sg^{\algA^{p_i+m_{i}+m_{i-1}}}\left(\left\{\begin{pmatrix} \tupl c_i\\ \tupl a_{i}\\ \tupl a_{i-1}\end{pmatrix},
       \begin{pmatrix}\tupl d_i \\\tupl a_{i}\\\tupl b_{i-1}\end{pmatrix},
     \begin{pmatrix}\tupl c_i\\\tupl b_{i}\\\tupl b_{i-1}\end{pmatrix} \right\}
     \right)	  \leq \algA^{p_i}\times B_{i}\times B_{i-1}
   \]
  \end{itemize}
  To obtain the edges of $G$, we translate all members of $\bigcup_{i=1}^n E_i$
  into either solid or dashed edges: Given $(\tupl e,\tupl f,\tupl g)\in E_i$,
  we place into $G$ an edge from $\tupl f$ to $\tupl g$. If $\tupl e=\tupl c_i$
  and the $i$-th edge of $P$ is solid, the new edge of $G$ is solid, otherwise
  the new edge is dashed.
 (As a consequence, the values of $p_i,\tupl c_i,\tupl d_i$ only matter when the
 $i$-th edge of $P$ is solid. When the $i$-th edge is dashed, we will nonetheless
 keep these dummy parameters to make the notation simpler.)
\end{definition}

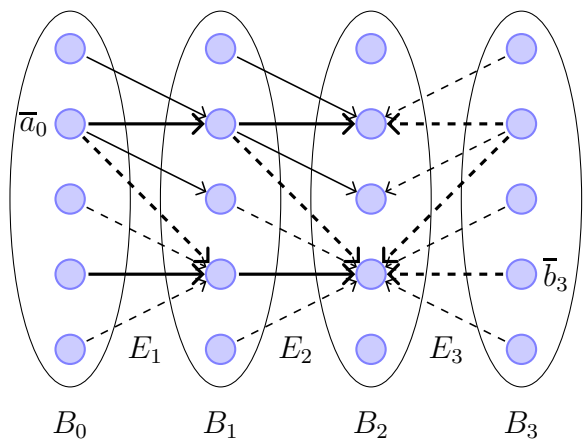
\begin{figure}
  \begin{center}
    \begin{tikzpicture} [node distance=2cm]
      \foreach \i/\k in {2/0,4/1,6/2,8/3}{
         \foreach \j in {-1,-2,-3,-4,-5}{
	   \node[vert] (p\i\j) at (\i,\j){};
         }
         \draw (\i,-3) ellipse (0.8cm and 2.5cm);
	 \node at (\i,-6) {$B_\k$};
       }
       \node[label=left:$\tupl a_0$] at (p2-2) {};
       \node[label=right:$\tupl b_3$]at (p8-4) {};
    \foreach \j/\k/\styl in {-1/-2/,-5/-4/dashed,-2/-3/,-3/-4/dashed}{
      \draw[arw,\styl] (p2\j)--(p4\k);
      \draw[arw,\styl] (p4\j)--(p6\k);
      \draw[arw,dashed] (p8\j)--(p6\k);
    }
    \foreach \i/\k in {3/1,5/2,7/3}
      \node at (\i,-5) {$E_\k$};
    \foreach \j/\k/\styl in {-2/-2/,-4/-4/,-2/-4/dashed}{
      \draw[arw,very thick,\styl] (p2\j)--(p4\k);
      \draw[arw,very thick,\styl] (p4\j)--(p6\k);
      \draw[arw,dashed,very thick,\styl] (p8\j)--(p6\k);
    }
    \end{tikzpicture}
    \caption{An example of a testing pattern digraph $G$ with the generating set drawn thick. The
      path $P$ is the same as in Figure~\ref{figCMtermsProduct}.}\label{figCMGIexample}
  \end{center}
\end{figure}

 See Figure~\ref{figCMGIexample} for an example of a testing pattern digraph
$G$  (and its generating set) in the case when $P$ encodes a chain of $2+1$
Gumm terms.

Note that in $\TPG(\hat a,\hat b,\hat c,\hat d)$ the generators of all $E_i$'s
form a pattern digraph isomorphic to $J\times P$ (where $J$ is the
pattern digraph in Figure~\ref{figGenerators}). This will be important later in
Lemma~\ref{lemMinGen}.

\begin{observation}\label{obsSubdirect} 
  Let $\algA$ be an idempotent algebra, $\tupl m,\tupl p$ be tuples of positive integers
  and $\TPG(\hat{a},\hat{b},\hat{c},\hat{d})$ be an $(\tupl m,\tupl p)$-ary
  testing pattern digraph. Then:
  \begin{enumerate}[(a)]
    \item the set of edges between $B_{i-1}$ and $B_i$ forms a subdirect relation,
    \item if the $i$-th edge of $P$ is solid then the set of solid edges between $B_{i-1}$ and $B_{i}$ is also a subdirect relation,
    \item \label{itmAllToOne} if the $i$-th edge of $P$ is a forward edge then
      $\TPG(\hat a,\hat b,\hat c,\hat d)$ contains an edge from any $\tupl p\in B_{i-1}$ to $\tupl b_i$, and
    \item if the $i$-th edge of $P$ is a backward edge then $\TPG(\hat a,\hat b,\hat c,\hat d)$ contains an edge from any $\tupl p\in B_{i}$ to $\tupl b_{i-1}$.
  \end{enumerate}
\end{observation}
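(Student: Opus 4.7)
The plan is to use idempotency throughout, together with the fact that $E_i$ is generated by three explicit tuples, to reduce each claim to showing that some relevant set is a subalgebra containing the known generators of $B_{i-1}$ or $B_i$.

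For part (a), I would work in the forward-edge case (the backward case being symmetric). The edge set between $B_{i-1}$ and $B_i$ is the projection $\pi_{2,3}(E_i)$, obtained by dropping the first coordinate. Its projection onto the second coordinate is a subalgebra of $B_{i-1}$ containing $\tupl a_{i-1}$ (from the first and second generators) and $\tupl b_{i-1}$ (from the third generator), so it equals $B_{i-1}=\Sg(\{\tupl a_{i-1},\tupl b_{i-1}\})$. Symmetrically the third-coordinate projection equals $B_i$.

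For part (b), the crux is to show that the set $S=\{(\tupl f,\tupl g)\in B_{i-1}\times B_i : (\tupl c_i,\tupl f,\tupl g)\in E_i\}$ is itself a subalgebra of $B_{i-1}\times B_i$. Here idempotency is essential: if $k$-ary $f$ is a basic operation of $\algA$ and $(\tupl c_i,\tupl f_j,\tupl g_j)\in E_i$ for $1\le j\le k$, then applying $f$ coordinatewise and using $f(\tupl c_i,\dots,\tupl c_i)=\tupl c_i$ keeps us inside $E_i$ with first coordinate still $\tupl c_i$. Since $(\tupl c_i,\tupl a_{i-1},\tupl a_i)$ and $(\tupl c_i,\tupl b_{i-1},\tupl b_i)$ are generators of $E_i$, the set $S$ contains $(\tupl a_{i-1},\tupl a_i)$ and $(\tupl b_{i-1},\tupl b_i)$, so its projections onto the two factors contain $\{\tupl a_{i-1},\tupl b_{i-1}\}$ and $\{\tupl a_i,\tupl b_i\}$ respectively, and by the same argument as in (a) these projections equal $B_{i-1}$ and $B_i$.

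For parts (c) and (d), I would fix the target vertex and reuse the same idempotency trick. For (c), let $S=\{\tupl p\in B_{i-1} : \exists \tupl e,\ (\tupl e,\tupl p,\tupl b_i)\in E_i\}$. Applying a $k$-ary operation to such witnesses and using $f(\tupl b_i,\dots,\tupl b_i)=\tupl b_i$ shows $S$ is a subalgebra. The generators $(\tupl d_i,\tupl a_{i-1},\tupl b_i)$ and $(\tupl c_i,\tupl b_{i-1},\tupl b_i)$ of $E_i$ put $\tupl a_{i-1}$ and $\tupl b_{i-1}$ into $S$, so $S=B_{i-1}$. Part (d) is identical after swapping the roles of the second and third coordinates in the backward-edge version of $E_i$. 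There is no real obstacle here — the only thing one has to be careful about is which generators of $E_i$ actually have the ``pinned'' coordinate equal to the fixed value ($\tupl c_i$, or $\tupl b_i$, or $\tupl b_{i-1}$); once that bookkeeping is clear, idempotency does all the work.
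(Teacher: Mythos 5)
Your proposal is correct and follows essentially the same route as the paper's proof: project $E_i$, read off which generators have the relevant coordinate pinned (to $\tupl c_i$, $\tupl b_i$, or $\tupl b_{i-1}$), and use idempotency to keep that coordinate fixed while generating, so the resulting set is a subalgebra containing the generators of the relevant potato. Your write-up merely spells out the idempotency step that the paper leaves implicit, so there is nothing substantive to add.
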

\begin{proof}
  To see the first two claims, observe that the projection of each $E_i$ to
  $A^{p_i}\times B_i$ contains the tuples $(\tupl c_i,\tupl a_i), (\tupl
  c_i,\tupl b_i)$ and $B_i=\Sg(\{\tupl a_i,\tupl
  b_i\})$ (the situation for $B_{i-1}$ is similar).

To prove the third claim, observe that the projection of $E_i$ to $B_{i-1}\times B_i$
contains the tuples $(\tupl a_{i-1},\tupl b_i)$ and $(\tupl b_{i-1},\tupl
b_i)$. Since $\algA$ is idempotent and $B_{i-1}=\Sg(\{\tupl a_{i-1},\tupl
b_{i-1}\})$,
the projection of $E_i$ to $B_{i-1}\times B_i$ contains $B_{i-1}\times \{\tupl b_i\}$. The proof of the last claim is similar.
\end{proof}

A notable example of a testing pattern digraph is $K(\algA)\times P$
itself.
Continuing with our representation of $\algF$ as a subuniverse of $\algA^{|A|^2}$, with the free generators $x$ and $y$ equal to the binary projection maps $\pi_0$ and $\pi_1$ respectively, we have that $K(\algA)\times P$ is isomorphic to the 
$(\overline{|A|^2};\overline{|A|^2})$-ary
testing pattern digraph $\TPG(\hat{a},\hat{b},\hat{c},\hat{d})$, where $\hat{a}=(x,x,\dots,x)$, $\hat{b}=(y,y,\dots,y)$, $\hat{c}=(x,\dots,x)$, and
$\hat{d}=(y,\dots,y)$.
Here, and in the following, $\overline{|A|^2}$ denotes a constant tuple or sequence of an appropriate length with value $|A|^2$ at each entry.
To see this claim, note that all of the potatoes $B_0$, $B_1,\dots,B_n$ will be equal to $F$ and each $E_i$
is either $K(\algA)$ or $K(\algA)$ with its second and third coordinates swapped.

\begin{definition}
We say that an algebra $\algA$ satisfies the condition
$\HasPath_P(\tupl m;\tupl p)$ if whenever $G$ is an $(\tupl m;\tupl p)$-ary testing pattern
digraph for $\algA$ and $P$, there is a $P$-shaped walk from the initial to the
terminal vertex in $G$.
\end{definition}

For example, the $G$ in Figure~\ref{figCMGIexample} fails to have a $P$-shaped walk from $\tupl a_0$ to $\tupl b_3$.

The next observation connects the $\HasPath_P$ property with the Maltsev condition
$M(P)$.

\begin{observation}\label{obsScaffolding}
  Let $P$ be a pattern path of length $n$. Then the following are equivalent:
  \begin{enumerate}
    \item Algebra $\algA$ satisfies $M(P)$\label{itmMP}
    \item There is a $P$-shaped walk from $x$ to $y$ in $K(\algA)$\label{itmPtoH}
    \item There is a $P$-shaped walk from $(x,0)$ to $(y,n)$ in $K(\algA)\times P$\label{itmPtoPH}
    \item Algebra $\algA$ satisfies $\HasPath_P(\tupl m; \tupl p)$
      for all choices of tuples  $\tupl m$ and $\tupl p$.\label{itmHasPath}
    \item Algebra $\algA$ satisfies
      $\HasPath_P(\overline{|A|^2};\overline{|A|^2})$\label{itmHasPathAsquared}.
  \end{enumerate}
\end{observation}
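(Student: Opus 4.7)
The plan is to close the cycle $(1)\Rightarrow(4)\Rightarrow(5)\Rightarrow(3)\Rightarrow(2)\Rightarrow(1)$. The equivalence $(1)\Leftrightarrow(2)$ is the content of the earlier unnamed observation relating $M(P)$ to $K(\algA)$, which yields $(2)\Rightarrow(1)$. For $(3)\Rightarrow(2)$, I note that projection $K(\algA)\times P\to K(\algA)$ onto the first factor is a pattern digraph homomorphism (a solid product edge is by construction one whose both factors are solid), so composing with a $P$-shaped walk from $(x,0)$ to $(y,n)$ gives a $P$-shaped walk from $x$ to $y$ in $K(\algA)$. The implication $(4)\Rightarrow(5)$ is mere specialisation, and $(5)\Rightarrow(3)$ is handled by the example preceding the definition of $\HasPath_P$: the specific choice $\hat a = (x,\dots,x)$, $\hat b = (y,\dots,y)$, $\hat c = (x,\dots,x)$, $\hat d = (y,\dots,y)$ with $x,y$ encoded as $|A|^2$-tuples produces a testing digraph isomorphic to $K(\algA)\times P$, so a walk witnessing $\HasPath_P(\overline{|A|^2};\overline{|A|^2})$ is exactly what $(3)$ demands.

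The main work is the implication $(1)\Rightarrow(4)$. Fix terms $s_0,\dots,s_n$ and $t_1,\dots,t_n$ of $\algA$ witnessing $M(P)$, and let $G=G(\hat a,\hat b,\hat c,\hat d)$ be an arbitrary testing pattern digraph with potatoes $B_i=\Sg^{\algA^{m_i}}(\{\tupl a_i,\tupl b_i\})$. Define the candidate walk by $\tupl f_i := s_i^{\algA^{m_i}}(\tupl a_i,\tupl b_i)\in B_i$; the equations $s_0(x,y)\approx x$ and $s_n(x,y)\approx y$ give $\tupl f_0=\tupl a_0$ and $\tupl f_n=\tupl b_n$, the required endpoints. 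For the $i$-th edge, apply $t_i$ coordinatewise to the three generators of $E_i$. When the $i$-th edge of $P$ is forward, the middle and last columns of the three generators are $(\tupl a_{i-1},\tupl a_{i-1},\tupl b_{i-1})$ and $(\tupl a_i,\tupl b_i,\tupl b_i)$, so the defining equations $t_i(x,x,y)\approx s_{i-1}(x,y)$ and $t_i(x,y,y)\approx s_i(x,y)$ collapse these to $\tupl f_{i-1}$ and $\tupl f_i$, and the resulting member of $E_i$ realises an edge from $\tupl f_{i-1}$ to $\tupl f_i$ in $G$. When the $i$-th edge is backward, the middle and last columns of the generators are swapped, but the corresponding pair of $M(P)$-equations is swapped in precisely the same way, so the result is the element of $E_i$ realising a backward edge from $\tupl f_{i-1}$ to $\tupl f_i$. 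Finally, if the $i$-th edge of $P$ is solid, the first column of the generators is $(\tupl c_i,\tupl d_i,\tupl c_i)$, and the equation $t_i(x,y,x)\approx x$ forces the first coordinate of the produced triple to be $\tupl c_i$, which is exactly the label declaring the edge solid in $G$.

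The only real obstacle is the symmetric bookkeeping of the forward/backward distinction: it appears both in how the generators of $E_i$ are assembled (which of $B_{i-1},B_i$ takes the second or third slot) and in which pair of $M(P)$ equations binds $t_i$ to $s_{i-1}$ and $s_i$. The whole point of the case split in the definition of $M(P)$ is that these two swaps cancel, so that in every case a single application of one pair of $M(P)$-equations plus the solid-edge equation $t_i(x,y,x)\approx x$ delivers the required $i$-th edge of the walk.
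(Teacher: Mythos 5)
Your proposal is correct and follows essentially the same route as the paper: the heart of both arguments is the implication $(1)\Rightarrow(4)$ via the walk $\tupl f_i=s_i(\tupl a_i,\tupl b_i)$ checked against the generators of $E_i$, together with the trivial $(4)\Rightarrow(5)$ and the identification of $K(\algA)\times P$ as an $(\overline{|A|^2};\overline{|A|^2})$-ary testing digraph for $(5)\Rightarrow(3)$. The only cosmetic difference is that you close a five-step cycle and spell out $(3)\Rightarrow(2)$ by projecting onto the first factor, whereas the paper simply invokes the previously noted equivalence of the first three items.
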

\begin{proof}
  The proof is easy once we unpack the definitions. We already know that the
  first three items are equivalent. Trivially,
  $(\ref{itmHasPath})\Rightarrow(\ref{itmHasPathAsquared})$.

  We show that $(\ref{itmMP})\Rightarrow (\ref{itmHasPath})$ as follows: Take a
  testing pattern digraph $\TPG(\hat{a},\hat{b},\hat{c},\hat{d})$. Since $\algA$
  satisfies $M(P)$, there are binary and ternary terms $s_0,\dots,s_n$  and
  $t_1,\dots,t_n$ that satisfy the equations in $M(P)$. Comparing the equations
  in $M(P)$ with the generators of $E_i$, it is straightforward to verify that
  the sequence $h_0,h_1,\dots,h_n$ defined by $h_i=s_i(\tupl a_i,\tupl b_i)$ is
  a $P$-shaped walk from $\tupl a_0$ to $\tupl b_n$.

  To see that $(\ref{itmHasPathAsquared})\Rightarrow(\ref{itmPtoPH})$, recall
  that $K(\algA)\times P$ can be regarded as an $(\overline{|A|^2};\overline{|A|^2})$-ary testing pattern digraph.  The property $\HasPath_P(\overline{|A|^2};\overline{|A|^2})$ then immediately
  gives us a $P$-shaped walk from $(x,0)$ to $(y,n)$ in $K(\algA)\times P$.
\end{proof}

It turns out that if $\algA$ is idempotent, then the minimum arity of instances that
we need to check to determine if $\algA$ satisfies $M(P)$ is merely $(1,\dots,1;1,\dots,1)$ rather than
$(\overline{|A|^2};\overline{|A|^2})$.
\begin{theorem}\label{thmDimensions}
  For a finite idempotent algebra $\algA$ and pattern path $P$ there is a $P$-shaped walk from $x$ to $y$ in the pattern digraph $K(\algA)$ if and only if $\algA$ satisfies $\HasPath_P(1,\dots,1;1,\dots,1)$.
\end{theorem}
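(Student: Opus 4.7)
The forward direction is immediate from Observation~\ref{obsScaffolding}: if there is a $P$-shaped walk from $x$ to $y$ in $K(\algA)$, then $\algA$ satisfies $M(P)$, and in particular $\HasPath_P(1,\ldots,1;1,\ldots,1)$.

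For the nontrivial converse, my plan is to upgrade the unary hypothesis to $\HasPath_P(\tupl m;\tupl p)$ for arbitrary tuples $\tupl m, \tupl p$, which by Observation~\ref{obsScaffolding} is enough to produce the desired walk in $K(\algA)$. The argument would proceed by induction on the total size $N = \sum_{i=0}^n |B_i|$ of the potatoes in an arbitrary testing pattern digraph $G(\hat a, \hat b, \hat c, \hat d)$, aiming to construct a $P$-shaped walk from $\tupl a_0$ to $\tupl b_n$.

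For the inductive step, when some $|B_i| \geq 2$, the strategy is to locate a proper subpotato $B_i' \subsetneq B_i$ containing $\tupl a_i$ (or $\tupl b_i$) such that replacing $B_i$ by $B_i'$ yields a smaller but still valid testing pattern digraph; the inductive hypothesis would then furnish a walk in the restriction, which lifts back to $G$. When no potato can be shrunk any further, the problem should reduce, via coordinate projections combined with idempotence, to instances of the unary testing pattern digraph, where the hypothesis directly supplies the required walks.

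The main obstacle is the shrinking step: an arbitrary subuniverse of $B_i$ need not respect the subdirectness of the edge sets $E_i$ and $E_{i+1}$ guaranteed by Observation~\ref{obsSubdirect}, nor preserve the solidity labeling where it is required. I expect this to force an absorption-style reduction, in the spirit of the Barto--Kozik framework \cite{barto-kozik-boundedwidth-ACM}: one must identify a subuniverse $B_i' \subsetneq B_i$ that is simultaneously compatible with both adjacent edge relations. Managing the interplay between forward and backward edges, and arranging that $B_i'$ still contains one of the designated generators so that the walks on either side can be chained through the modified potato, is where the deepest technical work should lie.
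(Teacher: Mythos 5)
The forward direction is fine and matches the paper. The converse, however, is where the entire content of the theorem lies, and your sketch leaves its central mechanism unresolved. You propose to induct downward, shrinking a potato $B_i$ to a proper subuniverse $B_i'$ compatible with the adjacent edge sets, and you yourself flag this shrinking step as the main obstacle, hoping an absorption-style argument in the Barto--Kozik spirit will supply it. That hope is not justified here: the theorem is about arbitrary finite idempotent algebras satisfying only $\HasPath_P(1,\dots,1;1,\dots,1)$, which for many paths $P$ (e.g.\ the Hagemann--Mitschke ones) does not yield a Taylor term, so the absorption machinery (which needs much more than idempotence) is simply not available, and there is no reason a proper subpotato compatible with both neighbouring edge relations and their solidity labels should exist. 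Moreover, a ``shrunk'' object is not automatically a testing pattern digraph: by Definition~\ref{defTesting} the potatoes and edge sets must be generated by a choice of $\hat a,\hat b,\hat c,\hat d$, so any replacement of $B_i$ has to be realized by new generators whose triples lie in the original $E_i$'s --- which is precisely what you would need an analogue of Lemma~\ref{lemMinGen} for, and your proposal does not supply one. Finally, the terminal step of your plan (``reduce via coordinate projections combined with idempotence to unary instances'') is exactly the hard lifting problem: a walk in a one-coordinate projection does not obviously lift to the full-arity graph, and no mechanism for the lift is given.

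The paper goes in the opposite direction: it fixes the hypothesis at low arity and \emph{increases} the parameters one coordinate at a time (Lemmas~\ref{lemInduction1}--\ref{lemInduction3}), until $\HasPath_P(\overline{|A|^2};\overline{|A|^2})$ is reached and Observation~\ref{obsScaffolding} finishes. The key ingredients, absent from your sketch, are: restricting to \emph{minimal} testing pattern digraphs (Observation~\ref{obsMin}); applying the lower-arity hypothesis to the graph with one coordinate forgotten, which yields only a prefix walk $q$ and a suffix walk $s$; extending $q$ and $s$ simultaneously to a $J\times P$-shaped walk (Lemma~\ref{lemPQRS}); using minimality to regenerate the same graph from new generators read off that walk (Lemma~\ref{lemMinGen}), so that by idempotence the extra coordinate becomes constant on the relevant potato or label set and a lower-arity instance of $\HasPath_P$ applies to the original graph; plus a separate auxiliary-relation trick for solid backward edges (the relation $R$ in Lemma~\ref{lemInduction3}). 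Without these steps, or a genuine substitute for them, your plan does not constitute a proof.
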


The proof of this theorem will be the goal of the next section. Before we start
proving it, though, let us remark on its significance. The condition
 $\HasPath_P(1,\dots,1;1,\dots,1)$ asserts that we can always satisfy the Maltsev
 condition $M(P)$ locally, and this local satisfiability is something that one
 can verify in time polynomial in $\|\algA\|$, where $\|\algA\|$ is a measure of the size of the algebra $\algA$.  To make this definite, we use the measure from \cite{freese-valeriote}:
\[
 \|\algA\| = \sum_{i = 0}^r k_i|A|^i,
\]
where  $r$ is the largest arity of the basic operations of $\algA$ and $k_i$
is the number of basic operations of $\algA$ of arity $i$, for $0 \le i \le
r$. In our analysis below, we will assume that $\algA$ has at least one at
least unary operation (nontrivial idempotent algebras can't have constant
operations) and hence $\|\algA\|\geq |A|$.

Let us now fix a pattern path $P$ of length $n$ with $k$ solid edges (and $n-k$
dashed edges).  To test if an algebra $\algA$ satisfies
$\HasPath_P(1,\dots,1;1,\dots,1)$, we need to examine all
$(1,\dots,1;1,\dots,1)$-ary testing pattern digraphs and check them for
$P$-shaped walks from the initial to the terminal vertex.

Definition~\ref{defTesting} gives us an algorithmic procedure to generate these
digraphs: For each possible combination of values
$\hat{a},\hat{b},\hat{c},\hat{d}$, generate the sets $B_i, E_i$ and translate
them into edges of $G = \TPG(\hat{a},\hat{b},\hat{c},\hat{d})$. Moreover, for the
$n-k$ dashed edges of $P$, the choice of labels $c_i,d_i$ has no effect on $G$ and
we only need to calculate the second and third coordinates of $E_i$. Omitting
these dummy labels, there are $|A|^{2(n+1)+2k}=|A|^{2n+2k+2}$ many tuples
$\hat{a},\hat{b},\hat{c},\hat{d}$ to consider.

Using Proposition 6.1 of \cite{freese-valeriote} it follows that, given $\hat{a},\hat{b},\hat{c},\hat{d}$, the graph $\TPG(\hat{a},\hat{b},\hat{c},\hat{d})$ can be constructed in time
  \[
  O( (n+1) r\|\algA\| + kr\|\algA\|^3 + (n-k)r\|\algA\|^2) =O(  kr\|\algA\|^3 + (n-k)r\|\algA\|^2) .
   \]
Since $k$ is fixed, if $k=0$ the asymptotic simplifies to $O(r\|\algA\|^2)$ and if $k>0$ the
asymptotic is $O(r\|\algA\|^3)$.

Any given testing pattern digraph $G$ has $O(n|A|)$ vertices and $O(n|A|^2)$
edges (for a given pair of vertices, we need to only remember the ``best''
edge, where solid is better than dashed is better than none), testing for a $P$-shaped
walk from $a_0$ to $b_n$ can be done by standard methods in time $O(n|A|^2)$
which is negligible compared to the time needed to generate $G$. All in all,
deciding if $\HasPath_P(1,\dots,1;1,\dots,1)$ holds can be carried out by an
algorithm whose run-time is $O(r|A|^{2n+2}\|\algA\|^2)$ for $k=0$ and
$O(r|A|^{2n+2k+2}\|\algA\|^3)$ for $k>0$.

\begin{corollary}\label{corMPpoly}
  Let $P$ be a fixed pattern path. The  associated idempotent, strong, linear
  Maltsev condition $M(P)$ can be decided for a finite idempotent algebra
  $\algA$ by an algorithm whose run-time can be bounded by a polynomial in
  $\|\algA\|$.
\end{corollary}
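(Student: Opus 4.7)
The plan is to combine Theorem~\ref{thmDimensions} with the complexity analysis laid out in the paragraphs immediately preceding the corollary. By Theorem~\ref{thmDimensions}, for a finite idempotent $\algA$ we have that $M(P)$ holds in $V(\algA)$ iff $\algA$ satisfies $\HasPath_P(1,\dots,1;1,\dots,1)$, so it suffices to exhibit a polynomial-time procedure for checking the latter condition.

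First I would enumerate all tuples $(\hat a, \hat b, \hat c, \hat d)$ that give rise to a $(1,\dots,1;1,\dots,1)$-ary testing pattern digraph. Since each entry is a single element of $A$, and since the labels $c_i, d_i$ are dummies whenever the $i$-th edge of $P$ is dashed, the number of tuples to enumerate is at most $|A|^{2(n+1)+2k} = |A|^{2n+2k+2}$, a constant exponent because $P$ is fixed. For each such tuple I would then construct $G(\hat a, \hat b, \hat c, \hat d)$ by generating each potato $B_i = \Sg^{\algA}(\{a_i,b_i\})$ and each edge set $E_i$, which is a subalgebra of $\algA^3$ (or $\algA^2$ in the dashed case) with three generators. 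Proposition 6.1 of \cite{freese-valeriote} supplies each $B_i$ in time $O(r\|\algA\|)$ and each $E_i$ in time $O(r\|\algA\|^3)$ for solid edges and $O(r\|\algA\|^2)$ for dashed ones, for a total construction cost of $O(kr\|\algA\|^3 + (n-k)r\|\algA\|^2)$ per tuple.

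Finally, once $G(\hat a, \hat b, \hat c, \hat d)$ has been built, translating the members of each $E_i$ into solid/dashed edges (keeping only the strongest edge between any pair) gives a graph on $O(n|A|)$ vertices with $O(n|A|^2)$ edges, and searching for a $P$-shaped walk from $\tupl a_0$ to $\tupl b_n$ can be done by standard dynamic programming layer-by-layer along $P$ in $O(n|A|^2)$ time, which is dominated by the construction cost. Multiplying by the number of tuples yields a total runtime of $O(r|A|^{2n+2}\|\algA\|^2)$ when $P$ has no solid edges and $O(r|A|^{2n+2k+2}\|\algA\|^3)$ otherwise, both polynomial in $\|\algA\|$ with $n$ and $k$ constants depending only on $P$. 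The only real obstacle here is the soundness of the local-to-global reduction, i.e.\ Theorem~\ref{thmDimensions}; modulo that theorem the corollary is a routine bookkeeping argument.
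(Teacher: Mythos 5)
Your proposal is correct and follows essentially the same route as the paper: invoke Theorem~\ref{thmDimensions} to reduce $M(P)$ to $\HasPath_P(1,\dots,1;1,\dots,1)$, then enumerate the $O(|A|^{2n+2k+2})$ generator tuples, build each testing digraph via Proposition 6.1 of \cite{freese-valeriote}, and search for a $P$-shaped walk, exactly as in the complexity analysis preceding the corollary. The runtime bounds you obtain match the paper's.
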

Using Theorem~\ref{thmDimensions} and referring to the list of examples from the example gallery, we immediately obtain polynomial-time algorithms for deciding
some well known strong Maltsev conditions for finite idempotent algebras.
\begin{corollary}  Let $n\ge 1$. Each of the following strong Maltsev conditions can be decided for a finite idempotent algebra $\algA$  by a polynomial-time algorithm with the prescribed run-time.
\begin{enumerate}
  \item Having a sequence of $n$ J\'onsson terms, directed or not, can be decided in time $O(r|A|^{4n+2}\|\algA\|^3)$.
  \item Having a sequence of $n+1$ Gumm terms, directed or not, can be decided in time $O(r|A|^{4n+2}\|\algA\|^3)$.
  \item \cite{Valeriote_Willard_2014} Having a sequence of $n$ Hagemann-Mitschke terms can be decided in time $O(r|A|^{2n+2}\|\algA\|^2)$.
  \item \cite{freese-valeriote} Having a Maltsev term can be decided in time $O(r|A|^4\|\algA\|^2)$.
  \item \cite{freese-valeriote} Having a majority term can be decided in time $O(r|A|^6\|\algA\|^3)$.
\end{enumerate}
\end{corollary}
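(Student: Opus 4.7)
The plan is to obtain each clause of the corollary as an immediate application of Corollary~\ref{corMPpoly} (and Theorem~\ref{thmDimensions}) by reading off the appropriate pattern path $P$ from the example gallery and substituting its parameters into the run-time analysis given just before Corollary~\ref{corMPpoly}. That analysis shows that for a fixed pattern path $P$ of length $n$ containing $k$ solid edges, the condition $\HasPath_P(1,\dots,1;1,\dots,1)$ --- which by Theorem~\ref{thmDimensions} is equivalent to $\algA$ satisfying $M(P)$ for a finite idempotent $\algA$ --- can be decided in time $O(r|A|^{2n+2}\|\algA\|^2)$ when $k=0$ and $O(r|A|^{2n+2k+2}\|\algA\|^3)$ when $k>0$. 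So the only work is to read off the pair $(n,k)$ for each classical Maltsev condition in the list.

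I would proceed case by case through the gallery. A Maltsev term corresponds to the length-one, all-dashed path of Figure~\ref{figConPerm}, so $n=1$, $k=0$, yielding $O(r|A|^4\|\algA\|^2)$. A majority term corresponds to the length-one, all-solid path of Figure~\ref{figMajority}, so $n=k=1$, yielding $O(r|A|^6\|\algA\|^3)$. A chain of $n$ Hagemann--Mitschke terms corresponds to the all-dashed path of length $n$ in Figure~\ref{fignperm} ($k=0$), yielding $O(r|A|^{2n+2}\|\algA\|^2)$. A chain of $n$ J\'onsson (resp.\ directed J\'onsson) terms corresponds to the length-$n$ fully solid path in Figure~\ref{figCD2kand1Pattern} (resp.\ Figure~\ref{figDirectedJonsson}), where $k=n$, yielding $O(r|A|^{4n+2}\|\algA\|^3)$. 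Finally, a chain of $n+1$ Gumm (resp.\ directed Gumm) terms corresponds to the path of Figure~\ref{figGumm} (resp.\ Figure~\ref{figDirectedGumm}), obtained from the corresponding J\'onsson path by appending a trailing dashed backward edge, giving the stated Gumm bound.

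No new conceptual machinery is needed --- all the hard work is packaged into Theorem~\ref{thmDimensions} (which reduces $M(P)$ to the local condition on $1$-ary testing pattern digraphs) and into the run-time accounting already carried out before Corollary~\ref{corMPpoly}. The only genuine care required is the bookkeeping for the two Gumm variants: the trailing dashed backward edge adds one to the path length but not to $k$, and the ``directed'' analogues contribute the same pair $(n,k)$ as their non-directed counterparts, which is why both instances in each of the first two clauses receive the same run-time bound. Beyond that, the corollary is a pure table look-up against the example gallery.
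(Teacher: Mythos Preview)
Your approach is exactly the one the paper intends: the corollary has no separate proof beyond the sentence ``Using Theorem~\ref{thmDimensions} and referring to the list of examples from the example gallery, we immediately obtain\ldots'', and your case-by-case reading of the pair $(\text{length},k)$ from the gallery and substitution into the general run-time bound is precisely that. One small caveat: for the Gumm (and directed Gumm) case the path has length $n+1$ with $k=n$ solid edges, so a literal substitution into the formula $O(r|A|^{2\cdot\text{length}+2k+2}\|\algA\|^3)$ yields exponent $4n+4$, not the $4n+2$ printed in the statement; you glossed this as ``giving the stated Gumm bound'' without actually checking the arithmetic, so be aware the printed exponent and the formula do not quite agree.
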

%
%
%
%
%
%
\section{From local to global}
Let $\algA$ be a finite idempotent algebra that satisfies the condition
\[\HasPath_P(1,1,\dots,1;1,\dots,1).\]
 Our goal is to increase all of the parameters of
$\HasPath_P$, eventually establishing that $\HasPath_P(\overline{|A|^2};\overline{|A|^2})$ holds for $\algA$ and thereby proving, using Observation~\ref{obsScaffolding}, that $\algA$ satisfies $M(P)$.

First observe that by adding dummy coordinates, we can always decrease the
parameters of $\HasPath_P$:
\begin{observation}\label{obsDownward}
  If $\algA$ satisfies
  $\HasPath_P(\tupl m; \tupl p)$,
  $m_i'\leq m_i$, for $0 \le i \le n$, and  $p_i'\leq p_i$, for $1 \le i \le n$, then $\algA$ also
  satisfies  $\HasPath_P(\tupl m'; \tupl p')$.
\end{observation}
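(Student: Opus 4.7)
The plan is to show that any testing pattern digraph for the smaller parameters $(\tupl m';\tupl p')$ can be obtained as a coordinate projection of a testing pattern digraph for the larger parameters $(\tupl m;\tupl p)$, so that a $P$-shaped walk in the larger digraph (which exists by hypothesis) projects down to one in the smaller digraph.

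Concretely, given an arbitrary $(\tupl m';\tupl p')$-ary testing pattern digraph $G'=G(\hat a',\hat b',\hat c',\hat d')$ for $\algA$ and $P$, I would build tuples $\hat a,\hat b,\hat c,\hat d$ by padding: extend each $\tupl a_i'$ to an $m_i$-tuple $\tupl a_i$ by repeating its last coordinate, and do the same for $\tupl b_i,\tupl c_i,\tupl d_i$. Form the $(\tupl m;\tupl p)$-ary testing pattern digraph $G=G(\hat a,\hat b,\hat c,\hat d)$. By the assumption $\HasPath_P(\tupl m;\tupl p)$, there is a $P$-shaped walk $\tupl h_0,\tupl h_1,\dots,\tupl h_n$ in $G$ from $\tupl a_0$ to $\tupl b_n$.

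Next I would consider, for each $i$, the coordinate projection $\pi_i\colon A^{m_i}\to A^{m'_i}$ onto the first $m'_i$ coordinates, together with the analogous projections of the label coordinates. Each $\pi_i$ is a homomorphism of algebras, so $\pi_i(B_i)$ is a subalgebra of $A^{m'_i}$; since $\pi_i(\tupl a_i)=\tupl a_i'$ and $\pi_i(\tupl b_i)=\tupl b_i'$ by the way we padded, $\pi_i(B_i)=B_i'$. The same argument applied coordinate-wise to $E_i\subseteq \algA^{p_i+m_{i-1}+m_i}$ shows that the combined projection sends $E_i$ onto $E_i'$, because the three generators of $E_i$ project exactly to the three generators of $E_i'$. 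Moreover, a solid edge of $G$ arises from some $(\tupl c_i,\tupl f,\tupl g)\in E_i$, and its projection has first block $\pi(\tupl c_i)=\tupl c_i'$, so it is a solid edge of $G'$; dashed edges are sent to edges of $G'$ (solid or dashed). Therefore the sequence $\pi_0(\tupl h_0),\pi_1(\tupl h_1),\dots,\pi_n(\tupl h_n)$ is a $P$-shaped walk in $G'$ from $\tupl a'_0$ to $\tupl b'_n$, as required.

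Since $G'$ was an arbitrary $(\tupl m';\tupl p')$-ary testing pattern digraph, this establishes $\HasPath_P(\tupl m';\tupl p')$. There is no real obstacle here: the argument is essentially bookkeeping about which coordinates are kept and the elementary fact that the projection of a subalgebra generated by certain tuples is the subalgebra generated by the projected tuples. The only minor care needed is in verifying that the padding preserves the distinguishing coordinate $\tupl c_i$ used to mark solid edges, which is immediate from the construction.
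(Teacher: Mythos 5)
Your proof is correct, and it is exactly the argument the paper has in mind: the paper states this observation without proof, offering only the hint ``by adding dummy coordinates,'' and your padding-plus-coordinate-projection argument (generators of $B_i$ and $E_i$ project onto the generators of $B_i'$ and $E_i'$, solid labels $\tupl c_i$ project to $\tupl c_i'$, so the $P$-shaped walk projects to one in $G'$) is precisely the formalization of that hint.
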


\begin{definition}\label{defMin} Let $\algA$ be an algebra, $P$ a pattern path and $\tupl m,\tupl
  p$ be tuples of positive integers. The $(\tupl m;\tupl p)$-ary testing pattern
  digraph $\TPG(\hat{a},\hat{b},\hat{c},\hat{d})$ is \emph{minimal} if no proper
  subgraph of $\TPG(\hat{a},\hat{b},\hat{c},\hat{d})$ is isomorphic to another
  $(\tupl m;\tupl p)$-ary testing pattern digraph for $\algA$ and $P$.
\end{definition}

It follows that every testing pattern digraph contains a
subgraph isomorphic to a minimal testing pattern digraph. Since isomorphism
respects initial and terminal vertices and removing edges or vertices can't
create new $P$-shaped walks, it is enough to look at minimal testing pattern
digraphs.

\begin{observation}\label{obsMin}
  Let $\algA$ be an algebra and $\tupl m,\tupl p$ be tuples of values. If every minimal $(\tupl m;\tupl p)$-ary  testing pattern digraph $\TPG(\hat{a},\hat{b},\hat{c},\hat{d})$ has a $P$-shaped walk from the initial to the terminal vertex, then $\algA$ satisfies $\HasPath_P(\tupl m;\tupl p)$.
\end{observation}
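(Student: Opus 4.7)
The proof is a minimality-and-transfer argument. Given an arbitrary $(\tupl m;\tupl p)$-ary testing pattern digraph $G = G(\hat a,\hat b,\hat c,\hat d)$ for $\algA$ and $P$, the plan is to locate inside $G$ a subgraph which is isomorphic to a \emph{minimal} testing pattern digraph, then invoke the hypothesis on that minimal digraph and transport the resulting walk back to $G$.

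To find the subgraph, I would consider the collection $\mathcal{S}$ of all subgraphs of $G$ that are isomorphic to some $(\tupl m;\tupl p)$-ary testing pattern digraph for $\algA$ and $P$. Since $G \in \mathcal{S}$, this collection is non-empty, and because $G$ is finite I can pick $G' \in \mathcal{S}$ that is minimal with respect to the number of vertices plus edges. By construction, $G' \cong H$ for some $(\tupl m;\tupl p)$-ary testing pattern digraph $H$. I would then argue that $H$ is itself minimal in the sense of Definition~\ref{defMin}: otherwise $H$ contains a proper subgraph isomorphic to another testing pattern digraph, and transporting this proper subgraph through the isomorphism $H \cong G'$ would give a proper subgraph of $G'$ still lying in $\mathcal{S}$, contradicting the minimality of $G'$.

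With $H$ shown to be a minimal testing pattern digraph, the hypothesis provides a $P$-shaped walk from the initial vertex of $H$ to its terminal vertex. Pattern digraph isomorphisms preserve initial and terminal vertices, edge directions, and solid/dashed edge labels, so the walk transports along $H \cong G'$ to a $P$-shaped walk in $G'$ from the initial to the terminal vertex of $G'$. Since by the paper's convention any subgraph shares its initial and terminal vertices with the ambient graph, and every edge of $G'$ is an edge of $G$ of the same type (solid or dashed), this walk is automatically a $P$-shaped walk in $G$ from the initial to the terminal vertex, which is what $\HasPath_P(\tupl m;\tupl p)$ requires.

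There is no serious obstacle here: the observation is essentially a bookkeeping consequence of the definitions. The one place to be careful is verifying that a minimal $G' \in \mathcal{S}$ exists and that its abstract isomorphism type is indeed a minimal testing pattern digraph, but both points follow from the finiteness of $G$ and the fact that the subgraph relation strictly decreases the vertex-plus-edge count.
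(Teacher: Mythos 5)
Your argument is correct and is essentially the argument the paper itself gives (just before the observation): every testing pattern digraph contains a subgraph isomorphic to a minimal one, isomorphisms respect the initial and terminal vertices, and a $P$-shaped walk in a subgraph is automatically a walk in the ambient digraph, so the hypothesis on minimal testing pattern digraphs transfers to all of them. Your extra care in checking that the minimal-by-count subgraph has a minimal isomorphism type is a harmless elaboration of what the paper leaves implicit, and the finiteness you invoke is available since the paper works with finite algebras.
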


The following two lemmas allow us to comfortably handle minimal testing pattern digraphs.

\begin{lemma}\label{lemMinGen}
  If $\TPG(\hat{a},\hat{b},\hat{c},\hat{d})$ is a minimal testing pattern
  digraph and $g$ is a $J\times P$-shaped walk from $\tupl a_0$
  to $\tupl b_n$ in $\TPG(\hat{a},\hat{b},\hat{c},\hat{d})$ then
  $\TPG(\hat{a},\hat{b},\hat{c},\hat{d})$ is equal to the pattern digraph
  $\TPG(\hat{a}',\hat{b}',\hat{c},\hat{d}')$ where we retain the labels $\hat c$, let
  $\tupl a_i'=g(x,i), \tupl b_i'=g(y,i)$, and choose $\tupl d_i'$ so that
  $(\tupl d_i',g(x,i-1),g(y,i))\in E_i$ (if the $i$-th edge of $P$ is a forward
  edge) or   $(\tupl d_i',g(x,i),g(y,i-1))\in E_i$ (if the $i$-th edge of $P$ is a backward edge).
\end{lemma}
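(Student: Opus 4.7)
My overall strategy is to show that $G(\hat a', \hat b', \hat c, \hat d')$ is a subgraph of $G(\hat a, \hat b, \hat c, \hat d)$ with matching initial and terminal vertices, and then to invoke the minimality of $G(\hat a, \hat b, \hat c, \hat d)$---together with the fact that $G(\hat a', \hat b', \hat c, \hat d')$ is itself a testing pattern digraph of the same arity---to conclude equality. For the vertex inclusion $B_i' \subseteq B_i$, a routine induction along the structure of $J \times P$ shows that $g(x,i), g(y,i) \in B_i$, since edges of $G(\hat a, \hat b, \hat c, \hat d)$ only connect consecutive potatoes and $g(x,0) = \tupl a_0 \in B_0$. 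Hence $B_i' = \Sg(\{g(x,i), g(y,i)\}) \subseteq B_i$, and the initial and terminal vertices agree by the very definition of a $J \times P$-shaped walk.

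For the edge inclusion, the key observation is that the three edges of $J \times P$ between positions $i-1$ and $i$ correspond exactly to the three generators of $E_i$ (and of $E_i'$): the two $J$-loops contribute the two ``side'' generators, both carrying first coordinate $\tupl c_i$, while the dashed $J$-edge contributes the ``middle'' generator, carrying first coordinate $\tupl d_i$ (respectively $\tupl d_i'$). When the $i$-th edge of $P$ is solid, the two loop edges of $J \times P$ are also solid, so $g$ sends them to solid edges of $G(\hat a, \hat b, \hat c, \hat d)$, forcing the two side generators of $E_i'$ to lie in $E_i$; the middle generator lies in $E_i$ by the very choice of $\tupl d_i'$. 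Thus $E_i' \subseteq E_i$, and edges of $G(\hat a', \hat b', \hat c, \hat d')$, together with their solid/dashed labels, are contained in edges of $G(\hat a, \hat b, \hat c, \hat d)$. When the $i$-th edge of $P$ is dashed (necessarily backward, since we have excluded dashed forward edges), every relevant edge of both graphs is dashed, and it suffices to show that the projection of $E_i'$ onto its last two coordinates is contained in the analogous projection of $E_i$. This follows because the three projected generators of $E_i'$ are images under $g$ of edges of $J \times P$, and hence edges of $G(\hat a, \hat b, \hat c, \hat d)$, so they lie in the projection of $E_i$; since projections of subalgebras are subalgebras, the full projection of $E_i'$ lies in that of $E_i$.

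Once the subgraph relation is established, the equality $G(\hat a', \hat b', \hat c, \hat d') = G(\hat a, \hat b, \hat c, \hat d)$ follows immediately from the minimality hypothesis. The step I would be most careful about in the write-up is the dashed-edge case: because the first coordinates $\tupl c_i$ are genuine dummies there, the set-theoretic inclusion $E_i' \subseteq E_i$ may fail, and one must instead argue via the weaker projection inclusion, which is sufficient for pattern-digraph subgraphness but not for 3D equality of the generating relations.
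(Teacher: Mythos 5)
Your proof follows the paper's argument essentially verbatim: you show that the generators of each $E_i'$ already lie in $E_i$ (the two side generators because $g$ carries the solid edges of $J\times P$ to solid edges of $G(\hat a,\hat b,\hat c,\hat d)$, which are exactly those witnessed by the label $\tupl c_i$, and the middle generator by the very choice of $\tupl d_i'$), conclude that $G(\hat a',\hat b',\hat c,\hat d')$ is a subgraph with the same initial and terminal vertices, and then invoke minimality to get equality. Your additional care in the dashed-edge case, where the labels are dummies and one only needs containment of the projections onto the last two coordinates, is a legitimate refinement of the paper's one-line claim ``$E_i'\subset E_i$'' rather than a different approach.
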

\begin{proof}
  Let $E'_i$ be the $i$-th labelled edge set of $\TPG(\hat{a}',\hat{b}',\hat{c},\hat{d}')$. By the choice of generators, we immediately have $E'_i\subset E_i$ and hence $\TPG(\hat{a}',\hat{b}',\hat{c},\hat{d}')$ is a subgraph of $\TPG(\hat{a},\hat{b},\hat{c},\hat{d})$.  By construction, the testing pattern digraph $\TPG(\hat{a}',\hat{b}',\hat{c},\hat{d}')$ has the same initial and terminal vertices as  $\TPG(\hat{a},\hat{b},\hat{c},\hat{d})$  and so by the minimality of $\TPG(\hat{a},\hat{b},\hat{c},\hat{d})$ we conclude that these two testing patterns are equal.
\end{proof}

The following Lemma will help us to prove Lemmas~\ref{lemInduction1},
\ref{lemInduction2}, and \ref{lemInduction3} by induction on the arity of
testing pattern digraphs. The hypothesis of the Lemma is a bit long, but it
describes something quite natural: Given a testing pattern digraph
$G^0$, we can expand the tuples generating $G^0$ to get a more complicated
testing pattern digraph $G$. It now turns out that we can lift any $P$-shaped
path in $G^0$ to an ``almost path'' in $G$.

\begin{lemma}[Partial lifting]\label{lemLifting}
  Let $\algA$ be an algebra and $P$ be a pattern path of length $n$.
  Let $G^0=\TPG(\hat{a}^0,\hat{b}^0,\hat{c}^0,\hat{d}^0)$ be an $(\tupl m;\tupl p)$-ary
  testing pattern digraph. Let $(\tupl m';\tupl p')$ be  such that $m_i'\geq
  m_i$ for each $i\in\{0,1,\dots,n\}$ and $p_i'\geq p_i$
  for each $i=1,\dots,n$. Let $G=\TPG(\hat{a},\hat{b},\hat{c},\hat{d})$ be any $(\tupl m';\tupl p')$-ary testing pattern digraph
  so that for each applicable $i$ the tuple $\tupl a_i^0$ is a prefix of $\tupl a_i$,
$\tupl b_i^0$ is a prefix of $\tupl b_i$, $\tupl c_i^0$ is a prefix of $\tupl
  c_i$, and $\tupl d_i^0$ is a prefix of $\tupl d_i$.

  Let $p$ be a $P$-shaped path from $\tupl a_0^0$ to $\tupl b_n^0$ in $G^0$.
  Denote by $\tupl e_1,\dots,\tupl e_n$ the labels of the edges of $p$. Then there exist
  tuples $\tupl u_0,\dots, \tupl u_{n}$, $\tupl w_0,\dots,\tupl w_{n}$
  and $\tupl f_1,\dots,\tupl f_n$ so that
  \begin{enumerate}
    \item for each $i=0,1,\dots,n$, the tuple $p(i)$ is a prefix of both
      $\tupl u_i$ and $\tupl w_i$,
    \item for each $i=1,2,\dots,n$, the tuple $\tupl e_i$ is a prefix of $\tupl f_i$,
    \item $\tupl w_0=\tupl a_0$ and $\tupl u_n=\tupl b_n$,
    \item if the $i$-th edge of $P$ is a forward edge, then $(\tupl
      f_i, \tupl u_{i-1}, \tupl w_{i})\in E_i$ (where $E_i$ is the $i$-th
      edge relation of $G$); if the
      $i$-the edge of $P$ is a backward edge then $(\tupl
      f_i, \tupl w_{i},\tupl u_{i-1})\in E_i$,
    \item if $i\in\{0,1,\dots,n\}$ is such that the last $m_i'-m_i$ coordinates of
      $\tupl a_i$ and $\tupl b_i$ agree, then $\tupl u_i=\tupl w_i$,
    \item if $i\in[n]$ is such that the last $p_i'-p_i$ coordinates of
      $\tupl c_i$ and $\tupl d_i$ agree and the $i$-th edge is solid then
      $\tupl f_i=\tupl c_i$.
  \end{enumerate}
  \end{lemma}
\begin{proof}
  Let us examine the $i$-th edge of $p$. Without loss of generality assume
  that the $i$-th edge is a forward edge (the case of backward edges is similar).
  In order for $p$ to be a $P$-shaped path in $G$, we have $(\tupl e_i, p(i-1), p(i)) \in E_i^0$ (where $E_i^0$ is the $i$-th edge relation of $G^0$). Since $E_i^0$ is a subpower of $\algA$ generated
  by the three tuples
  \[
  (\tupl c_{i}^0, \tupl a_{i-1}^0, \tupl a_{i}^0),\  (\tupl d_{i}^0, \tupl a_{i-1}^0, \tupl b_{i}^0),\  (\tupl c_{i}^0, \tupl b_{i-1}^0, \tupl b_{i}^0),
  \]
  there exists a ternary term operation $t_i$ of $\algA$ such
  that
  \begin{align*}
  \tupl e_{i}&=t_i(\tupl c_{i}^0,\tupl d_{i}^0,\tupl c_{i}^0)\\
    p(i-1)&=t_i(\tupl a_{i-1}^0,\tupl a_{i-1}^0,\tupl b_{i-1}^0)\\
    p(i)&=t_i(\tupl a_{i}^0,\tupl b_{i}^0,\tupl b_{i}^0).
  \end{align*}
  We obtain
 $\tupl f_i$, $\tupl u_{i-1}$, and $\tupl w_i$  by extending the input tuples
  (i.e. removing the zero superscripts):
  \begin{align*}
  \tupl f_{i}&=t_i(\tupl c_{i},\tupl d_{i},\tupl c_{i})\\
    \tupl u_{i-1}&=t_i(\tupl a_{i-1},\tupl a_{i-1},\tupl b_{i-1})\\
    \tupl w_i&=t_i(\tupl a_{i},\tupl b_{i},\tupl b_{i}).
  \end{align*}
  This procedure works for $i=1,2,\dots,n$, so it remains to define $\tupl
  w_0=\tupl a_0$ and $\tupl u_n=\tupl b_n$ (see Figure~\ref{fig:almost-path}).
\begin{figure}
  \begin{center}
    \begin{tikzpicture}
      \foreach \i in {0,2,4,6,8}{
	\node[vert] (pp\i) at (\i,4){};
	     }
      \node[left of=pp0]{$P$};
      \foreach \i in {0, 2,4,6, 8}{
	 \foreach \j in {0,1}{
	     \node[vert] (p\i\j) at (\i,\j){};

	   }
	 \draw (\i,0.5) ellipse (0.8cm and 1 cm);
	 }
	 \node[label={[label distance=0.4cm]above:${\tupl a_0=\tupl w_0}$}] at (p01) {};
	 \node[label={[label distance=0.4cm]below:${\tupl b_n=\tupl u_n}$}]at (p80) {};

	 \node[label={[label distance=0.4cm]above:${\tupl w_1}$}] at (p21) {};
	 \node[label={[label distance=0.4cm]above:${\tupl w_2}$}] at (p41) {};
	 \node[label={[label distance=0.4cm]above:${\tupl w_n}$}] at (p81) {};

	 \node[label={[label distance=0.4cm]below:${\tupl u_0}$}] at (p00) {};
	 \node[label={[label distance=0.4cm]below:${\tupl u_1}$}] at (p20) {};
	 \node[label={[label distance=0.4cm]below:${\tupl u_2}$}] at (p40) {};

      \foreach \i/\j in {0/2,4/6}{
         	\draw[arw] (p\i0)--(p\j1);
		\draw[arw] (pp\i)--(pp\j);
	      }
      \foreach \i/\j in {2/4,6/8}{
		\draw[arw] (p\j1)--(p\i0);
		\draw[arw] (pp\j)--(pp\i);
	      }
%
%
    \end{tikzpicture}
  \end{center}
  \caption{The ``almost path'' we obtain by lifting the a $P$-shaped path for
the  $P$  above. Edge labels omitted for simplicity.}\label{fig:almost-path}
\end{figure}


  The conclusions of the lemma
  all follow from the way that the relations $B_i$ and $E_i$ are generated.
 The first two points are consequences of $t_i$ acting coordinatewise. The
  third claim follows from the way we defined $\tupl w_0$ and $\tupl u_n$. Since
  $E_i$ is invariant under $t_i$, we have $(\tupl f_i,\tupl u_{i-1},\tupl
  w_i)\in E_i$, proving the fourth point (the situation for backward edges is
  similar). Finally, the operation $t_i$ is
  idempotent, so if, say, the last $m_i'-m_i$ coordinates of $\tupl a_i$ and
  $\tupl b_i$ are both equal to some tuple $\tupl q$, then
  $\tupl u_i=p(i)\tupl q=\tupl w_i$; the case of edge labels is similar.
\end{proof}

Observe that Lemma~\ref{lemLifting} also holds if we replace prefixes by
suffixes everywhere; we will sometimes use it in this way.

We will only need Lemma~\ref{lemLifting} in the case when $\tupl m',\tupl p'$ differ
from $\tupl m,\tupl p$ in at most one position (either $m_k'>m_k$ or $p_k'>p_k$). Then $m_i'-m_i=0$ and
$p_i'-p_i=0$ for almost all $i$. Therefore, the last two claims of
Lemma~\ref{lemLifting} give us
that $\tupl w_0,\tupl w_1,\dots,\tupl w_k,\tupl u_{k+1},\dots, u_n$ fails to be
a $P$-shaped path only because either $\tupl w_k\neq \tupl u_{k}$,
or $\tupl f_k\neq \tupl c_k$ (if the $k$-th edge is solid).

\begin{lemma}\label{lemPQRS}
Let $\TPG(\hat{a},\hat{b},\hat{c},\hat{d})$ be an
  $(\tupl m; \tupl p)$-ary testing pattern digraph for $\algA$ and $P$ for some tuples $\tupl m$ and $\tupl p$.
  Let $Q$ be a prefix of $P$ and $S$ a suffix of $P$ (possibly overlapping with $Q$). Then:
  \begin{enumerate}[(a)]
    \item\label{itmQ} Every $Q$-shaped walk $q\colon Q\to \TPG(\hat{a},\hat{b},\hat{c},\hat{d})$ from
      $\tupl a_0$ to some $\tupl e_k\in B_k$ can be extended to a
      $J\times P$-shaped walk from $\tupl a_0$ to $\tupl b_n$. (Extending $q$ means that the image of $(x,i)$ is $q(i)$ for all $i=0,1,\dots,k$.)
    \item\label{itmS} Every $S$-shaped walk $s\colon S\to \TPG(\hat{a},\hat{b},\hat{c},\hat{d})$ from
      some $\tupl f_\ell\in B_\ell$ to $\tupl b_n$  can be extended to a
      $J\times P$-shaped walk from $\tupl a_0$ to $\tupl b_n$. (Extending $s$ means that the image of $(y,i)$ is $s(i)$ for all $i=\ell,\dots,n$.)
    \item \label{itmPQRS3} If $\TPG(\hat{a},\hat{b},\hat{c},\hat{d})$ is minimal, then for every $Q$-shaped walk
      $q$ and $S$-shaped walk $s$ as in the previous two parts there exists a
      $J\times P$-shaped walk from $\tupl a_0$ to $\tupl b_n$ in $\TPG(\hat{a},\hat{b},\hat{c},\hat{d})$
      that extends both $q$ and $s$ at the same time (see Figure~\ref{figPQRS}).
  \end{enumerate}
\end{lemma}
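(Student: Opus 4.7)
My plan is to prove parts (a) and (b) by explicit constructions that do not use minimality, and then derive part (c) by applying part (b) after reparameterizing $G$ via Lemma~\ref{lemMinGen}.

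For part (a), I would define the extension $g\colon J\times P\to G$ as follows: set $g(y,i)=\tupl b_i$ for every $0\le i\le n$, set $g(x,i)=q(i)$ for $0\le i\le k$, and for $i>k$ set $g(x,i)=w(\tupl a_i,\tupl b_i)$, where $w$ is any binary term with $\tupl c_k=w(\tupl a_k,\tupl b_k)$ (such $w$ exists since $\tupl c_k\in B_k=\Sg(\{\tupl a_k,\tupl b_k\})$). Verifying that $g$ is a pattern digraph homomorphism splits into three steps. The $y$-level edges $(\tupl b_{i-1},\tupl b_i)$ (and their backward analogues) come straight from the third generator $(\tupl c_i,\tupl b_{i-1},\tupl b_i)$ of $E_i$, so they carry the correct solid/dashed type. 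The $x$-level edges for $i>k$ are produced by applying $w$ coordinate-wise to the first and third generators of $E_i$; idempotence of $w$ gives $w(\tupl c_i,\tupl c_i)=\tupl c_i$, so the first coordinate of the resulting element of $E_i$ is $\tupl c_i$ and the edge is solid exactly when the $i$-th edge of $P$ is solid. (At the junction $i=k+1$ we use that $w(\tupl a_k,\tupl b_k)=\tupl c_k=q(k)$ to match the endpoint.) Finally, the dashed diagonal edges of $J\times P$ are supplied by Observation~\ref{obsSubdirect}(c) and (d), since their $y$-endpoint is $\tupl b_i$ or $\tupl b_{i-1}$.

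Part (b) is symmetric: set $g(x,i)=\tupl a_i$ for all $i$, choose a binary term $w$ with $s(\ell)=w(\tupl a_\ell,\tupl b_\ell)$, set $g(y,i)=w(\tupl a_i,\tupl b_i)$ for $i<\ell$, and set $g(y,i)=s(i)$ for $i\ge\ell$. The only new wrinkle is that the diagonal edges now terminate at an arbitrary element of $B_i$ rather than at $\tupl b_i$; this is handled by picking a term $v$ with $g(y,i)=v(\tupl a_i,\tupl b_i)$ and applying $v$ to the first two generators of $E_i$ (which share the $B_{i-1}$-coordinate $\tupl a_{i-1}$) to produce the required edge.

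For part (c), the strategy is to install $q$ first via part (a), use minimality to re-base the canonical walks on this installation, and then apply part (b) in the new parameterization. Explicitly, use part (a) to obtain a $J\times P$-shaped walk $g_1$ extending $q$. By Lemma~\ref{lemMinGen} applied to the minimal $G$ and $g_1$, we have $G=G(\hat a',\hat b',\hat c,\hat d')$ with $\tupl a'_i=g_1(x,i)$ and $\tupl b'_i=g_1(y,i)$, so in particular $\tupl a'_i=q(i)$ for $0\le i\le k$ and the new potatoes $B'_i$ coincide with $B_i$. The walk $s$ is still an $S$-shaped walk in $G$ from $\tupl d_\ell\in B'_\ell$ to $\tupl b'_n=\tupl b_n$, so applying part (b) to $s$ using the new generators produces a $J\times P$-shaped walk $g_2$ with $g_2(x,i)=\tupl a'_i=q(i)$ for $i\le k$ and $g_2(y,i)=s(i)$ for $i\ge\ell$, extending both $q$ and $s$ simultaneously.

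I expect part (c) to be the main obstacle. The canonical extensions produced by parts (a) and (b) will typically conflict on the ``other'' level---for example, part (a) forces $g(y,\ell)=\tupl b_\ell$, which in general differs from $s(\ell)$---so the two walks cannot simply be overlaid. Lemma~\ref{lemMinGen} resolves this precisely by letting us redefine the canonical walks so that they already agree with $q$, after which the symmetric construction of part (b) leaves $q$ intact.
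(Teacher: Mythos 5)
Your proof is correct and follows essentially the same route as the paper: parts (a) and (b) by direct construction with one level of $J\times P$ pinned to the canonical vertices $\tupl b_i$ (resp.\ $\tupl a_i$) and the diagonal edges supplied by the subdirectness/idempotency argument of Observation~\ref{obsSubdirect}, and part (c) by extending $q$ first, re-basing the parameters via Lemma~\ref{lemMinGen} using minimality, and then applying part (b), whose construction fixes the $x$-level at the (new) $\tupl a_i'$. The only cosmetic difference is that you extend $q$ (resp.\ $s$) by applying a single binary term $w$ with $q(k)=w(\tupl a_k,\tupl b_k)$ coordinate-wise to the generators of the $E_i$, whereas the paper extends stepwise using subdirectness of the edge relations; both are valid and rest on the same facts.
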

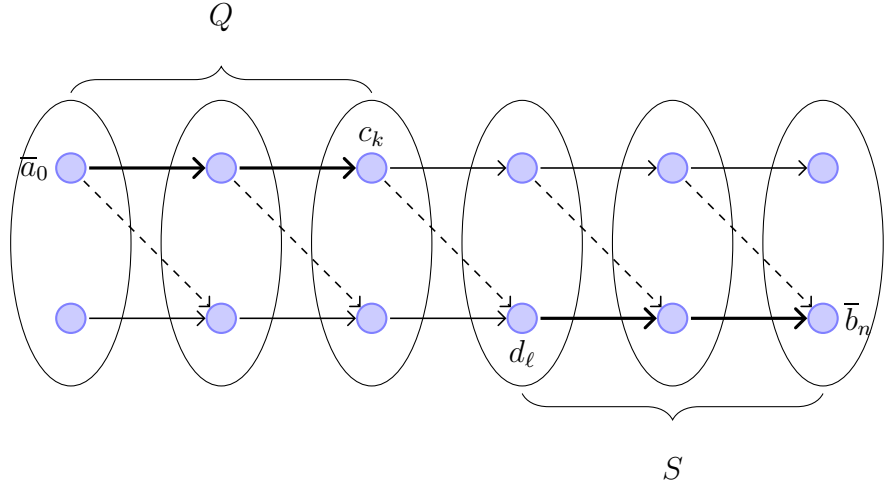
\begin{figure}
  \begin{center}
    \begin{tikzpicture}
      \foreach \i/\k in {2/0,4/1,6/2,8/3,10/4,12/5}{
	 \foreach \j in {0,2}{
	     \node[vert] (p\i\j) at (\i,\j){};
	   }

	 \draw (\i,1) ellipse (0.8cm and 1.9cm);
	 }
	 \node[label=left:$\tupl a_0$] at (p22) {};
	 \node[label=right:$\tupl b_n$]at (p120) {};

	 \node at (4,4) {$Q$};
	 \node at (10,-2) {$S$};
	 \node[label=above:$e_k$] at (p62) {};
	 \node[label=below:$f_\ell$] at (p80) {};
	 \draw [decorate,decoration={brace,amplitude=10pt}]
	 ($(p22)+(0,1)$)--($(p62)+(0,1)$);
         \draw [decorate,decoration={brace,amplitude=10pt}]
	 ($(p120)+(0,-1)$)--($(p80)+(0,-1)$);

      \foreach \j/\k/\styl in {0/0/, 2/2/very thick, 2/0/dashed}{
	\draw[arw,\styl] (p2\j)--(p4\k);}

      \foreach \j/\k/\styl in {0/0/, 2/2/very thick, 2/0/dashed}{
       \draw[arw,\styl] (p6\j)--(p4\k);}

      \foreach \j/\k/\styl in {0/0/, 2/2/, 2/0/dashed}{
	 \draw[arw,\styl] (p6\j)--(p8\k);
       }
      \foreach \j/\k/\styl in {0/0/very thick, 2/2/, 2/0/dashed}{
	 \draw[arw,\styl] (p8\j)--(p10\k);
	 \draw[arw,\styl] (p12\j)--(p10\k);
      }
    \end{tikzpicture}
  \end{center}
  \caption{The situation in part~(\ref{itmPQRS3}) of Lemma~\ref{lemPQRS}. The arrows show the images of
    $J\times P$ under the common extension of $q$ and $s$. Images of the paths $Q$ and $S$
  are drawn in bold.}\label{figPQRS}
\end{figure}
\begin{proof}
  We begin with part~(\ref{itmQ}). By Observation~\ref{obsSubdirect}, we have that the set of
 (solid) edges between $B_i$ and $B_{i+1}$ is subdirect for each $i< n$. This allows the map
 $q$ to be extended to a $P$-shaped walk $q'$ from $\tupl a_0$ to some $q'(n)\in B_n$.
 We claim that $g$ defined by
 $g(x,i)=q'(i)$ and $g(y,i)=\tupl b_i$  for all $0 \le i \le n$ is a $J\times
 P$-shaped walk from $\tupl a_0$ to $\tupl b_n$.

  To prove this, let $0 \le i <n$ and consider the edge in $P$ between $i$ and
  $i+1$. We will examine in detail one of the three possibilities for this edge
  (recall that we disallow forward dashed edges) and leave the other two cases
  for the reader. Suppose that there is a solid edge from $i$ to $i+1$ in $P$.
  We need to prove that $(g(x,i), g(x, i+1))=(q'(i),q'(i+1))$, $(g(y,i), g(y,
  i+1))=(\tupl b_i,\tupl b_{i+1})$, and $(g(x,i), g(y, i+1))=(q'(i),\tupl
  b_{i+1})$ are all edges in $\TPG(\hat{a},\hat{b},\hat{c},\hat{d})$ and that
  the first two are solid.  The first two can be seen to be solid, since $q'$
  is a pattern digraph homomorphism and $(\tupl c_i,\tupl b_i,\tupl b_{i+1})\in
  E_i$, while the existence of the edge $(q'(i),\tupl b_{i+1})$ follows from
  part~(\ref{itmAllToOne}) of Observation~\ref{obsSubdirect}.

  The proof of part~(\ref{itmS}) is similar. Note that our construction for
  this part is such that the image of each $(x,i)$ in the resulting $J\times
  P$-shaped walk is exactly $\tupl a_i$. We will use this in the next
  paragraph.

  We prove part~(\ref{itmPQRS3}) by applying parts~(\ref{itmQ}) and~(\ref{itmS})
  in turn: First get a $J\times P$-shaped walk $g$ from $\tupl a_0$ to $\tupl
  b_n$ that extends $q$. Using Lemma~\ref{lemMinGen}, we get that
  $\TPG(\hat{a},\hat{b},\hat{c},\hat{d})=\TPG(\hat{a}',\hat b',\hat{c},\hat{d}')$,
  where $\tupl a_i'=g(x,i)$ for $i=0,1,\dots,k$. Now apply part~(\ref{itmS}) that we
  have just proved to
  $\TPG(\hat{a}',\hat{b}',\hat{c},\hat{d}')$ and $s$ to obtain a $J\times
  P$-shaped walk from $\tupl a_0$ to $\tupl b_n$ that sends each $(i,x)$ to $\tupl
  a_i'=g(i,x)$ and hence extends $q$ and $s$ at the same time.
\end{proof}

\begin{lemma}\label{lemInduction1}
Suppose that $\algA$ satisfies
  $\HasPath_P(\tupl m; \tupl p)$ for some tuples $\tupl m$ and $\tupl p$ and let $0 \le k \le n$. Then $\algA$
  also satisfies $\HasPath_P(\tupl m^+; \tupl p)$, where $\tupl m^+$ is obtained from $\tupl m$ by increasing $m_k$ by 1.
\end{lemma}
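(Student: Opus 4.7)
The plan is to take an arbitrary $(\tupl m^+;\tupl p)$-ary testing pattern digraph $G=G(\hat a,\hat b,\hat c,\hat d)$ for $\algA$ and $P$, which by Observation~\ref{obsMin} I may assume to be minimal, and produce in it a $P$-shaped walk from $\tupl a_0$ to $\tupl b_n$. Write the $k$-th generators as $\tupl a_k=(\tupl a_k',a_k'')$ and $\tupl b_k=(\tupl b_k',b_k'')$, where $\tupl a_k',\tupl b_k'$ are the first $m_k$ coordinates and $a_k'',b_k''$ are the last. Replacing $\tupl a_k,\tupl b_k$ by $\tupl a_k',\tupl b_k'$ (keeping everything else the same) yields an $(\tupl m;\tupl p)$-ary testing pattern digraph $G'$, which by the hypothesis has a $P$-shaped walk $\vect h=(h_0,\dots,h_n)$ from $\tupl a_0$ to $\tupl b_n$.

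For every $i\ne k$ the vertex $h_i$ already lies in the matching potato of $G$, so the only remaining task is to choose a lift $\tilde h_k\in B_k$ of $h_k\in B_k^{(1)}=\pi_1(B_k)$ so that $(h_0,\dots,h_{k-1},\tilde h_k,h_{k+1},\dots,h_n)$ is a $P$-shaped walk in $G$. This amounts to ensuring that both $(h_{k-1},\tilde h_k)$ and $(\tilde h_k,h_{k+1})$ appear as edges of $G$ with the same types as their projected counterparts in $G'$. One-sided lifts exist individually, because $\pi_1$ projects each of $E_k,E_{k+1}$ onto the corresponding labelled edge set of $G'$; the crux of the proof is to coordinate the last coordinate $z$ of $\tilde h_k=(h_k,z)$ between the two sides at once.

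To do this I would first pass $G'$ to a minimal subgraph (which still satisfies the hypothesis), and then invoke Lemma~\ref{lemPQRS}(\ref{itmPQRS3}) in the minimal $G'$, using $\vect h$ as both the prefix walk $q$ and the suffix walk $s$, to obtain a $J\times P$-shaped walk $g'$ with $g'(x,i)=g'(y,i)=h_i$ for every $i$. Lemma~\ref{lemMinGen} applied to $g'$ then exhibits the minimal subgraph of $G'$ as the testing pattern digraph generated by $\tupl a_i'=\tupl b_i'=h_i$ at every position, a very rigid description in which each potato collapses essentially onto the walk. Lifting this rigid picture back to $G$---trivially for $i\ne k$, and by picking any $\tilde h_k\in B_k$ with $\pi_1(\tilde h_k)=h_k$ at position $k$---and using Observation~\ref{obsSubdirect}(\ref{itmAllToOne}) to supply the automatic dashed edges into $\tupl b_i$, one verifies that $(h_0,\dots,h_{k-1},\tilde h_k,h_{k+1},\dots,h_n)$ is the required $P$-shaped walk in $G$. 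The boundary cases $k=0$ and $k=n$ are treated identically; there it is the initial or terminal vertex of $G$ itself that is lifted.

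The \emph{main obstacle} is the fusion of the two one-sided lifts into a single element $\tilde h_k\in B_k$: since $B_k$ is only subdirect (not full) in $B_k^{(1)}\times B_k^{(2)}$, the last coordinate required on the left and the one required on the right can a priori disagree. Reconciling them is exactly what requires combining the minimality of $G$, the rigid generator description provided by Lemma~\ref{lemMinGen}, and the simultaneous prefix-suffix extension provided by Lemma~\ref{lemPQRS}(\ref{itmPQRS3}); without these structural inputs the two subuniverses of candidate last coordinates need not intersect.
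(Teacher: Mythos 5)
You correctly isolate the crux---reconciling the last coordinate of the lift $\tilde h_k$ between the edge arriving at $B_k$ and the edge leaving it---but your proposed resolution does not actually achieve it. All of your structural input (passing to a minimal subgraph, Lemma~\ref{lemPQRS}(\ref{itmPQRS3}) with $q=s=\vect h$, and Lemma~\ref{lemMinGen}) is applied inside the projected digraph $G'$ (or a minimal subgraph of it), so what you obtain is a rigid description of that projected digraph---indeed its potatoes collapse to the singletons $\{h_i\}$ by idempotence---and this says nothing about the potato $B_k$ of $G$, nor about which elements of the subdirect relations $E_k$ and $E_{k+1}$ of $G$ lie above the two edges of $\vect h$ meeting at position $k$. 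In particular the minimality of $G$ is never exploited: no $J\times P$-shaped walk in $G$ itself is ever produced, so Lemma~\ref{lemMinGen} is never available for $G$. Consequently the final claim that \emph{any} $\tilde h_k\in B_k$ with $\pi_1(\tilde h_k)=h_k$ completes the walk is unsupported and in general false: subdirectness only guarantees some lift compatible with the incoming edge and some (possibly different) lift compatible with the outgoing edge, and Observation~\ref{obsSubdirect}(\ref{itmAllToOne}) only supplies edges into the distinguished generator $\tupl b_i$, not into the walk vertex $h_{k+1}$.

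The paper closes this gap by a different maneuver. From the walk in the reduced digraph one reads off, inside the minimal $G$ itself, a $Q$-shaped walk $q$ (prefix of length $k$) ending at some $\tupl d_k e\in B_k$ and an $S$-shaped walk $s$ (suffix of length $n-k$) starting at $\tupl d_k f\in B_k$; crucially both lifts share the same first $m_k$ coordinates $\tupl d_k$. Lemma~\ref{lemPQRS}(\ref{itmPQRS3}), applied to the minimal $G$, merges $q$ and $s$ into a $J\times P$-shaped walk $u$ with $u(x,k)=\tupl d_k e$ and $u(y,k)=\tupl d_k f$, and Lemma~\ref{lemMinGen} then forces $B_k=\Sg^{\algA^{m_k+1}}(\{\tupl d_k e,\tupl d_k f\})$, so by idempotence \emph{every} element of $B_k$ has first $m_k$ coordinates equal to $\tupl d_k$. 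Those coordinates can therefore be forgotten without losing information, and a second application of the hypothesis, in the form $\HasPath_P(m_0,\dots,m_{k-1},1,m_{k+1},\dots,m_n;\tupl p)$ (available by Observation~\ref{obsDownward}), yields a $P$-shaped walk that genuinely lives in $G$. It is this second invocation of $\HasPath_P$ after collapsing $B_k$, rather than any choice of lift, that resolves the obstacle you name; as written, your argument stops exactly there.
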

\begin{figure}
  \begin{center}
    \begin{tikzpicture}
      \foreach \i/\k in {2/0,4/1,6/2,8/3,10/4,12/5}{
	 \foreach \j in {0,2}{
	     \node[vert] (p\i\j) at (\i,\j){};
	   }
	 \draw (\i,1) ellipse (0.8cm and 1.9cm);
	 }
	 \node[label=left:$\tupl a_0$] at (p22) {};
	 \node[label=right:$\tupl b_n$]at (p120) {};

	 \node at (4,4) {$Q$};
	 \node at (10,-2) {$S$};
	 \node[label=above:$\tupl w_k$] at (p62) {};
	 \node[label=below:$\tupl u_k$] at (p60) {};
	 \draw [decorate,decoration={brace,amplitude=10pt}]
	 ($(p22)+(0,1)$)--($(p62)+(0,1)$);
         \draw [decorate,decoration={brace,amplitude=10pt}]
	 ($(p120)+(0,-1)$)--($(p60)+(0,-1)$);

      \foreach \j/\k/\styl in {0/0/, 2/2/very thick, 2/0/dashed}{
	 \draw[arw,\styl] (p2\j)--(p4\k);
       \draw[arw,\styl] (p6\j)--(p4\k);
       }
      \foreach \j/\k/\styl in {0/0/very thick, 2/2/, 2/0/dashed}{
	 \draw[arw,\styl] (p6\j)--(p8\k);
	 \draw[arw,\styl] (p10\j)--(p8\k);
	 \draw[arw,\styl] (p10\j)--(p12\k);
      }
    \end{tikzpicture}
  \end{center}
  \caption{The situation in Lemma~\ref{lemInduction1}. The arrows show the image
    of $u$, while the thick arrows are images of $q$ and $s$. Note that $\tupl
    w_k$ and $\tupl u_k$ differ in at most one coordinate.}\label{figInduction1}
\end{figure}

\begin{proof}
  Suppose that
  $G=\TPG(\hat{a},\hat{b},\hat{c},\hat{d})$ is an $(\tupl m^+; \tupl p)$-ary
  testing pattern digraph for $\algA$ and $P$ (all the other testing pattern
  digraphs we will consider in this proof are also for $\algA$ and $P$). We need to construct a
  $P$-shaped walk from $\tupl a_0$ to $\tupl b_n$. By Observation~\ref{obsMin}
  we may assume that $G$ is minimal.

  Let $Q$ be the prefix of $P$ of length $k$ and let $S$ be the suffix of $P$
  of length $n-k$. The vertex sets for $Q$ and $S$ are $\{0, 1, \dots, k\}$ and
  $\{k, k+1, \dots, n\}$ respectively. By forgetting the $(m_k + 1)$-th
  coordinate of $B_k$ we obtain new label sets $\hat{a}^0$ and $\hat{b}^0$.
  Consider the $(\tupl m; \tupl p)$-ary testing pattern digraph $G^0=\TPG(\hat{a}^0,\hat{b}^0,\hat{c},\hat{d})$.

  Applying $\HasPath_P(\tupl m; \tupl p)$ to $G^0$ we obtain a $P$-shaped path
  $p$ from the initial to the terminal vertex of $G^0$. Now apply
  Lemma~\ref{lemLifting} to $G^0$ and $G$. Since $G^0$ and $G$ only differ in the
  $k$-th potato, we see that in $G$ there exists a $Q$-shaped walk
  $q$ from $\tupl a_0$ to some $\tupl w_k\in B_k$ and an $S$-shaped walk $s$
  from a suitable
  $\tupl u_k\in B_k$ to $\tupl b_n$, where $\tupl u_k$ and $\tupl w_k$ differ only in
  the last coordinate (see Figure~\ref{figInduction1}).

  Now, using Lemma~\ref{lemPQRS} on $G$ and paths $p$, $q$, and $s$, we obtain that we can extend both $q$ and $s$
  to a $J\times P$-shaped walk $\alpha$ from $\tupl a_0$ to $\tupl b_n$. We now
  apply Lemma~\ref{lemMinGen} to $G$ and $\alpha$ to show that $G$ is equal to
  the testing pattern digraph  $G'=\TPG(\hat{a}',\hat{b}',\hat{c},\hat{d}')$.
  Examining Lemma~\ref{lemMinGen} in detail, we see that $\tupl a_k'=\alpha(x,k)=\tupl w_k$
  and $\tupl b_k'=\alpha(y,k)=\tupl u_k$.

  If we could show that there is a $P$-shaped path from $\tupl a_0$ to $\tupl
  b_n$ in $G'$, we would be done. To that end, define $\tupl m^-$ to be equal
  to $\tupl m$ everywhere except at the $k$-th place, where $m_k^-=1$, and
  consider the testing pattern digraph
  $G^-=\TPG(\hat{a}^-,\hat{b}^-,\hat{c},\hat{d}')$ of arity $(\tupl m^-,\tupl p)$
  and with $\hat{a}^-,\hat{b}^-$ equal to $\hat{a}',\hat{b}'$ everywhere except
  at the $k$-th position, where we let $\tupl a^-$ and $\tupl b^-$ equal to the
  last entry of $\tupl w_k$ and $\tupl u_k$, respectively. Since $m_k^-\leq
  m_k$, we can apply $\HasPath_P(\tupl m^-,\tupl p)$ to $G^-$.
  We next apply Lemma~\ref{lemLifting} to $G^-$ and $G'$, where we switch from prefixes to suffixes. Noting that the first
  $m_k$ entries of $\tupl a_k'$ and $\tupl b_k'$ agree, we see that the
  sequence of vertices produced by Lemma~\ref{lemLifting} is in fact a
  $P$-shaped path from $\tupl a_0'=\tupl a_0$ to $\tupl b_n'=\tupl b_n$ in
  $G'=G$, concluding the proof.
\end{proof}

\begin{lemma}\label{lemInduction2}
Suppose that $\algA$ satisfies
  $\HasPath_P(\tupl m; \tupl p)$ for some tuples $\tupl m$ and $\tupl p$ and let $1 \le k \le n$. If the $k$-th edge of $P$ is a solid forward edge then  $\algA$
  also satisfies $\HasPath_P(\tupl m; \tupl p^+)$, where $\tupl p^+$ is obtained from $\tupl p$ by increasing $p_k$ by $1$.
\end{lemma}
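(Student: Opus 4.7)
The plan is to mirror the structure of Lemma~\ref{lemInduction1}, trading the inductive step along a potato arity for one along a label arity. Write $\tupl c_k=\tupl c_k'e$ and $\tupl d_k=\tupl d_k'f$ with $\tupl c_k',\tupl d_k'\in A^{p_k}$ and $e,f\in A$, and fix a minimal $(\tupl m;\tupl p^+)$-ary testing pattern digraph $G=G(\hat a,\hat b,\hat c,\hat d)$. First I would project away the $(p_k+1)$-st coordinate of $\tupl c_k$ and $\tupl d_k$ to form a $(\tupl m;\tupl p)$-ary testing pattern digraph $\tilde G$ and apply the hypothesis $\HasPath_P(\tupl m;\tupl p)$ to it. This produces a $P$-shaped walk $v_0=\tupl a_0,v_1,\dots,v_n=\tupl b_n$ in $\tilde G$ whose solid $k$-th edge lifts to some $(\tupl c_k'z,v_{k-1},v_k)\in E_k$ for a suitable $z\in A$.

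Splitting $v$ at position $k$ gives a $Q$-shaped walk $q$ (prefix of length $k-1$, ending at $v_{k-1}$) and an $S$-shaped walk $s$ (suffix of length $n-k$, starting at $v_k$) that both remain valid in $G$, since only the edge relation at position $k$ was altered by the projection. By part~(\ref{itmPQRS3}) of Lemma~\ref{lemPQRS} applied to the minimal $G$, these extend to a single $J\times P$-shaped walk $u\colon J\times P\to G$, and Lemma~\ref{lemMinGen} then re-represents $G$ as $G(\hat a',\hat b',\hat c,\hat d')$ with $\tupl a'_i=u(x,i)$ and $\tupl b'_i=u(y,i)$. Since $(\tupl c_k'z,v_{k-1},v_k)=(\tupl c_k'z,u(x,k-1),u(y,k))\in E_k$, I may choose $\tupl d'_k=\tupl c_k'z$.

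With this choice the first-component subalgebra $C_k=\Sg^{\algA^{p_k+1}}(\tupl c_k,\tupl d'_k)=\Sg(\tupl c_k'e,\tupl c_k'z)$ becomes effectively one-dimensional: both generators agree on their first $p_k$ entries, so by idempotence every element of $C_k$ has first $p_k$ coordinates equal to $\tupl c_k'$. Since the first-component projection of every element of $E_k$ lies in $C_k$, the first $p_k$ label-coordinates of every edge at position $k$ are locked to $\tupl c_k'$. I would then form the $(\tupl m;p_1,\dots,p_{k-1},1,p_{k+1},\dots,p_n)$-ary testing pattern digraph $\tilde G'$ obtained from $G(\hat a',\hat b',\hat c,\hat d')$ by replacing $\tupl c_k$ with $e$ and $\tupl d'_k$ with $z$ at position $k$, and invoke $\HasPath_P$ at this smaller label arity (available from the hypothesis by Observation~\ref{obsDownward}). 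The resulting $P$-shaped walk $v''$ from $\tupl a_0$ to $\tupl b_n$ in $\tilde G'$ then lifts to a $P$-shaped walk in $G$: at positions $\neq k$ the edge sets of the two digraphs coincide, while the solid $k$-th edge $(e,v''_{k-1},v''_k)\in E''_k$ lifts to some $(\tupl u,v''_{k-1},v''_k)\in E_k$ with $\tupl u$ ending in $e$, and the constancy of the first $p_k$ coordinates of $C_k$ forces $\tupl u=\tupl c_k$, making the edge solid in $G$ as required.

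The hard part will be recognizing the right choice of $\tupl d'_k$: only by aligning its first $p_k$ coordinates with those of $\tupl c_k$ (which the first application of $\HasPath_P$ makes possible through the lift $(\tupl c_k'z,v_{k-1},v_k)\in E_k$) does $C_k$ collapse to a single first-$p_k$-coord value, and only then does the single-label-coordinate instance of $\HasPath_P$ suffice to supply a solid $k$-th edge with the full correct label $\tupl c_k$.
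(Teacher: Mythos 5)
Your proposal is correct and follows essentially the same route as the paper's proof: split $\tupl c_k$ into its first $p_k$ coordinates and its last coordinate, apply the hypothesis with the last label coordinate forgotten to get a walk whose $k$-th edge carries a label of the form $\tupl c_k' z$, use part~(\ref{itmPQRS3}) of Lemma~\ref{lemPQRS} together with Lemma~\ref{lemMinGen} to re-generate the digraph with $\tupl c_k' z$ as the new dashed label at position $k$, and then forget the first $p_k$ label coordinates so that solidity at position $k$ is detected by the last coordinate alone. Your explicit justification that a solid edge of the reduced digraph lifts to an edge labelled exactly $\tupl c_k$ is the same constant-prefix/idempotency observation the paper uses implicitly, so no gap remains.
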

\begin{proof}
  The proof of this lemma is similar to that of Lemma~\ref{lemInduction1}.
  Again, all testing pattern digraphs will be for $\algA$ and $P$. We
  start with a minimal $(\tupl m; \tupl
  p^+)$-ary testing pattern digraph $G=\TPG(\hat a,\hat b,\hat c,\hat d)$ and will prove that there is a
  $P$-shaped walk from $\tupl a_0$ to $\tupl b_n$ in $G$. We set $Q$ to be the prefix
  of $P$ with the vertex set $\{0, 1, \dots, k-1\}$ and $S$ to be the suffix of
  $P$ with the vertex set $\{k, k+1, \dots, n\}$. Take the $(\tupl m;\tupl
  p)$-ary pattern digraph $G^0=\TPG(\hat a,\hat
  b,\hat c^0,\hat d^0)$ where we get $\hat c^0$ and $\hat d^0$ from $\hat c$
  and $\hat d$ by forgetting the last entry of $\tupl c_k$ and $\tupl d_k$,
  respectively.

  As before, $\HasPath_P(\tupl m; \tupl p)$ and Lemma~\ref{lemLifting} give us that
  there exists an ``almost $P$-shaped'' path from $\tupl a_0$ to $\tupl b_n$ in
  $G$. This time, we have a $Q$-shaped walk $q$ from $\tupl a_0$ to some
  $q(k-1)$ and an $S$-shaped walk $s$ from $s(k)$ to $\tupl b_n$ such that
  $E_k$ contains an edge of the form $(\tupl f_k, q(k-1),s(k))$, where $\tupl
  f_k$ comes from Lemma~\ref{lemLifting}. Were it
  $\tupl f_k=\tupl c_k$, we would have a solid edge from $q(k-1)$ to $s(k)$ and
  we would be done. Alas, it could happen that
  $\tupl f_k$ and $\tupl c_k$ differ in the last coordinate.

  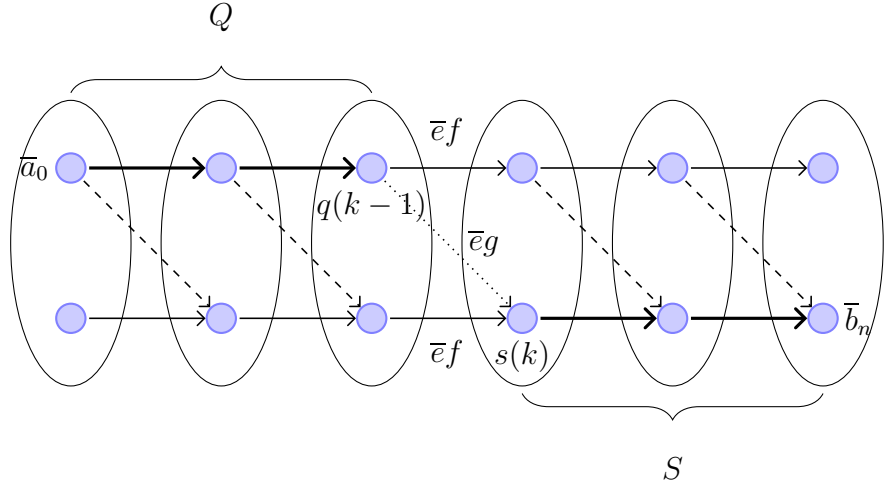
\begin{figure}
  \begin{center}
    \begin{tikzpicture}
	 \foreach \i/\k in {2/0,4/1,6/2,8/3,10/4,12/5}{
	   \foreach \j in {0,2}{
	       \node[vert] (p\i\j) at (1.2*\i,\j){};
	     }
	   \draw[rounded corners] ($(p\i0)-(0.8,1)$) rectangle
	   ($(p\i2)+(0.8,1)$) {};
	 }
	 \node[label=left:$\tupl a_0$] at (p22) {};
	 \node[label=right:$\tupl b_n$]at (p120) {};

	 \node at ($(p42)+(0,2)$) {$Q$};
	 \node at ($(p100)+(0,-2)$) {$S$};
	 \node[label={[label distance=0.0cm]above:$q(k-1)$}] at (p62) {};
	 \node[label=below:$s(k)$] at (p80) {};
	 \node[label=above:$\tupl f_k$] at ($(p80)!0.5!(p62)$){};
	 \node[label=above:$\tupl c_k$] at ($(p82)!0.5!(p62)$){};
	 \node[label=below:$\tupl c_k$] at ($(p80)!0.5!(p60)$){};
	 \draw [decorate,decoration={brace,amplitude=10pt}]
	 ($(p22)+(0,1)$)--($(p62)+(0,1)$);
         \draw [decorate,decoration={brace,amplitude=10pt}]
	 ($(p120)+(0,-1)$)--($(p80)+(0,-1)$);

      \foreach \j/\k/\styl in {0/0/, 2/2/very thick, 2/0/dashed}{
	 \draw[arw,\styl] (p2\j)--(p4\k);
       \draw[arw,\styl] (p6\j)--(p4\k);
       }
      \foreach \j/\k/\styl in {0/0/, 2/2/, 2/0/dotted}{
	 \draw[arw,\styl] (p6\j)--(p8\k);
      }
      \foreach \j/\k/\styl in {0/0/very thick, 2/2/, 2/0/dashed}{
	 \draw[arw,\styl] (p8\j)--(p10\k);
	 \draw[arw,\styl] (p12\j)--(p10\k);
      }
    \end{tikzpicture}
  \end{center}
    \caption{The situation in Lemma~\ref{lemInduction2}. As before, the images
      of $q$ and $s$ are in bold. The labels $\tupl ef$ and $\tupl eg$ refer to
      the labels of the middle edges.}\label{figInduction2}
\end{figure}
We again use part~(\ref{itmPQRS3}) of Lemma~\ref{lemPQRS} to obtain a $J\times
P$-shaped walk from $\tupl a_0$ to $\tupl b_n$ in $G$ that extends $q$ and $s$. Since
we have $(\tupl f_k, q(k-1),s(k))\in E_k$,
Lemma~\ref{lemMinGen} gives us that $G$ is equal to
$G'=\TPG(\hat a',\hat b',\hat c, \hat d')$ for suitable $\hat a'$, $\hat b'$, and
$\hat d'$, with $\tupl d_k'=\tupl f_k$. By forgetting  the first $p_k$
coordinates of the labels of $E_k$ we obtain a $(\tupl m; \tupl p^-)$-ary testing
pattern digraph $G^-$, where $\tupl p^-$ is obtained from $\tupl p$ by replacing $p_k$ by 1.
  $\HasPath_P(\tupl m;p_1,\dots,p_{k-1},1,p_{k+1},\dots,p_{n})$ can be applied to $G^-$ to produce
  a $P$-shaped walk from the initial to the terminal vertex of $G$. Using
  Lemma~\ref{lemLifting} for suffixes instead of prefixes, we can lift this $P$-shaped walk to
  $G'=G$ and are done.
\end{proof}

The case when the $k$-th edge is a backward solid edge will start out
similarly to Lemma~\ref{lemInduction2}, but we will need an additional trick.

\begin{lemma}\label{lemInduction3}
Suppose that $\algA$ satisfies
  $\HasPath_P(\tupl m; \tupl p)$ for some tuples $\tupl m$ and $\tupl p$ and let $1 \le k \le n$. If the $k$-th edge of $P$ is a solid backward edge then $\algA$
  also satisfies $\HasPath_P(\tupl m; \tupl p^+)$, where $\tupl p^+$ is
  obtained from $\tupl p$ by increasing $p_k$ by 1.
\end{lemma}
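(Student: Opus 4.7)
Follow the outline of Lemma~\ref{lemInduction2}. Take a minimal $(\tupl m; \tupl p^+)$-ary testing pattern digraph $G = G(\hat a, \hat b, \hat c, \hat d)$ for $\algA$ and $P$ (enough by Observation~\ref{obsMin}) and write $\tupl c_k = \tupl e f$ with $\tupl e \in A^{p_k}$ and $f \in A$. Apply $\HasPath_P(\tupl m; \tupl p)$ to the projection of $G$ that forgets the last coordinate of the labels on $E_k$: setting $Q = P_{\le k-1}$ and $S = P_{\ge k}$, this produces a $Q$-shaped walk $q$ in $G$ from $\tupl a_0$ to $q(k-1) \in B_{k-1}$, an $S$-shaped walk $s$ in $G$ from $s(k) \in B_k$ to $\tupl b_n$, and an element $(\tupl e g, s(k), q(k-1)) \in E_k$ for some $g \in A$ witnessing the $k$-th edge of the almost-walk (the middle and right coordinates are swapped compared to the forward case, reflecting the backward $k$-th edge of $P$). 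Lemma~\ref{lemPQRS}(c) then supplies a $J \times P$-shaped walk $u$ in $G$ extending both $q$ and $s$.

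The goal is to apply Lemma~\ref{lemMinGen} to rewrite $G = G(\hat a', \hat b', \hat c, \hat d')$ with the first $p_k$ coordinates of $\tupl d_k'$ equal to $\tupl e$. Once this succeeds, every generator of the rewritten $E_k$ shares first $p_k$ coordinates $\tupl e$, hence so does every element of $E_k$ by idempotency, so dropping the last label coordinate on $E_k$ preserves solidness; a final invocation of $\HasPath_P(\tupl m; p_1, \ldots, p_{k-1}, 1, p_{k+1}, \ldots, p_n)$ (valid by Observation~\ref{obsDownward}) on the resulting reduced testing pattern digraph then yields the desired $P$-shaped walk from $\tupl a_0$ to $\tupl b_n$ in $G$.

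The main obstacle — and where the ``additional trick'' enters — lies in the choice of $\tupl d_k'$. For a backward edge Lemma~\ref{lemMinGen} demands $(\tupl d_k', u(x, k), u(y, k-1)) \in E_k$, but the almost-walk's edge, rewritten via $u$, is $(\tupl e g, u(y, k), u(x, k-1)) \in E_k$, with middle and right coordinates swapped from what is needed. One cannot repair this by reselecting $u$: imposing $u(x, k) = s(k)$ or $u(y, k-1) = q(k-1)$ in Lemma~\ref{lemPQRS}(c) would itself require a solid edge $(\tupl e f, s(k), q(k-1)) \in E_k$, which is exactly what we are trying to exhibit. The trick must therefore synthesise a new element of $E_k$ of the required shape, in particular with first $p_k$ label coordinates $\tupl e$, by combining the almost-walk's edge with the other elements of $E_k$ whose labels also begin with $\tupl e$ — notably the two solid edges $(\tupl e f, u(x, k), u(x, k-1))$ and $(\tupl e f, u(y, k), u(y, k-1))$ arising from the solid edges of $J \times P$ at position $k$ under $u$ — using minimality of $G$ and the explicit description of how $s(k)$, $q(k-1)$, $u(x, k)$, $u(y, k-1)$ are built from $\tupl a_k, \tupl b_k, \tupl a_{k-1}, \tupl b_{k-1}$ by terms of $\algA$. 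Orchestrating this algebraic coordination so that the $\tupl e$-prefix of the label is preserved (automatic by idempotency, as soon as every input carries it) while the middle and right coordinates collapse to the prescribed $u(x, k)$ and $u(y, k-1)$ is the step I expect to be hardest; once such an element of $E_k$ is produced, the rest of the argument runs exactly as in Lemma~\ref{lemInduction2}.
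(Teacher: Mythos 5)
Your setup coincides with the paper's: the minimal $(\tupl m;\tupl p^+)$-ary digraph, the split $\tupl c_k=\tupl e f$, the almost-walk producing $(\tupl e g,s(k),q(k-1))\in E_k$, the extension $u$ via part~(\ref{itmPQRS3}) of Lemma~\ref{lemPQRS}, and the correct diagnosis that this witnessing edge points the wrong way for Lemma~\ref{lemMinGen}. But at exactly the point where the lemma needs its ``additional trick'' you stop: you say one must ``synthesise'' an element of $E_k$ of the form $(\tupl e g',u(x,k),u(y,k-1))$ by term-combining the known edges, call this the hardest step, and do not carry it out. That step is the entire content of the lemma, so this is a genuine gap, not a detail. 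Moreover the route you sketch looks unsalvageable: the available elements of $E_k$ with label prefix $\tupl e$ are $(\tupl e f,u(x,k),u(x,k-1))$, $(\tupl e g,u(y,k),u(x,k-1))$ and $(\tupl e f,u(y,k),u(y,k-1))$, and applying a term $h$ to them gives middle coordinate $h(u(x,k),u(y,k),u(y,k))$ and last coordinate $h(u(x,k-1),u(x,k-1),u(y,k-1))$; forcing these to equal $u(x,k)$ and $u(y,k-1)$ simultaneously asks $h$ to behave like a Maltsev operation on these elements, which is essentially the kind of term whose existence is at issue. (A secondary slip: after the rewriting you propose to drop the \emph{last} label coordinate of $E_k$; one must instead forget the first $p_k$ coordinates, else dashed edges labelled $\tupl e g$ would be reclassified as solid.)

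The paper's trick avoids manufacturing any new element of $E_k$. It defines the pp-defined relation
\[
R=\{(x,\tupl y,\tupl z)\colon \exists \tupl t\in B_{k-1},\tupl v\in B_k,\ (\tupl c_k,\tupl y,\tupl t),\,(\tupl e x,\tupl v,\tupl t),\,(\tupl c_k,\tupl v,\tupl z)\in E_k\},
\]
whose elements encode zigzags of three $E_k$-edges rather than single edges; $R$ is invariant since $\algA$ is idempotent, and it contains $(f,u(x,k),u(x,k-1))$, $(f,u(y,k),u(y,k-1))$ and, crucially, $(g,u(x,k),u(y,k-1))$, the last witnessed by the wrong-direction edge together with the two solid ones. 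One then forms the relabeled $(\tupl m;p_1,\dots,1,\dots,p_n)$-ary digraph $G(\hat a',\hat b',\hat c^\star,\hat d^\star)$ with labels $f,g$ at position $k$, whose $E_k^\star$ sits inside $R$, applies the hypothesis $\HasPath_P(\tupl m;p_1,\dots,1,\dots,p_n)$ to it, translates the $k$-th edge of the resulting walk $w$ back into a solid zigzag $(w(k),\tupl t)$, $(\tupl v,\tupl t)$, $(\tupl v,w(k-1))$ in the original $G$, applies part~(\ref{itmPQRS3}) of Lemma~\ref{lemPQRS} once more with the longer prefix $Q^+$ (ending at $k$) and suffix $S^+$ (starting at $k-1$), and only then invokes $\HasPath_P$ a final time to obtain the $P$-shaped walk in $G$. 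None of this second phase appears in your proposal.
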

\begin{proof}

  Take $G$, $G^0$, $Q$, and $S$ as in the proof of Lemma~\ref{lemInduction2}.
  As before we apply $\HasPath_P(\tupl m; \tupl p)$ to $G^0$ and lift the
  result to $G$ via Lemma~\ref{lemLifting}. We get a
  $Q$-shaped walk $q$ from $\tupl a_0$ and an $S$-shaped walk $s$ to $\tupl b_n$ such
  that $(\tupl f_k,s(k),q(k-1)) \in E_k$.
  Part~(\ref{itmPQRS3}) of Lemma~\ref{lemPQRS} again gives us a $J\times P$-shaped walk
  $\alpha$ in $\TPG(\hat a,\hat b,\hat c,\hat d)$ that extends $q$ and $s$.
  Applying Lemma~\ref{lemMinGen}, we again show that $G$ is equal to
  $G'=\TPG(\hat a',\hat b',\hat c,\hat d')$
  where $\tupl a_i'=\alpha(x,i)$ and $\tupl b_i'=\alpha(y,i)$ for each $i$.
  Unfortunately, copying the proof of Lemma~\ref{lemInduction2} fails at this
  point. The problem is that the edge $(\tupl f_k,s(k),q(k-1))$ has the wrong
  endpoints; we would need to have $(\tupl f_k,\alpha(x,k),\alpha(y,k-1))\in
  E_k$ to finish as we did in Lemma~\ref{lemInduction2}. We recover by studying
  an auxiliary relation.
  \begin{figure}
  \begin{center}
    \begin{tikzpicture}

	 \foreach \i/\k in {2/0,4/1}{
	   \foreach \j in {0,2}{
	       \node[vert] (p\i\j) at (\i,\j){};
	     }
	 }
	   \foreach \j in {0,2}{
	       \node[vert, right of=p4\j,node distance=2.5cm] (p6\j){};
	       \node[vert, right of=p6\j,node distance=3.5cm] (p8\j){};
	       \node[vert, right of=p8\j,node distance=2cm] (p10\j){};
	       \node[vert, right of=p10\j,node distance=2cm] (p12\j){};
	     }
          \draw[rounded corners] ($(p60)-(1.2,1)$) rectangle
	   ($(p62)+(1.2,1)$) {};
	   \draw[rounded corners] ($(p80)-(0.8,1)$) rectangle
	   ($(p82)+(0.8,1)$) {};
	   \foreach \i in {2,4,10,12}{
	      \draw[rounded corners] ($(p\i0)-(0.6,1)$) rectangle  ($(p\i2)+(0.6,1)$) {};
	      }

	 \node[label=above:$\tupl a_0$] at (p22) {};
	 \node[label=below:$\tupl b_n$]at (p120) {};

	 \node at ($(p42)+(0,2)$) {$Q$};
	 \node at ($(p100)+(0,-2)$) {$S$};
	 \node at ($(p60)+(0,-1.5)$) {$B_{k-1}$};
	 \node at ($(p80)+(0,-1.5)$) {$B_{k}$};

	 \node[label=above:$q(k-1)$] at (p62) {};
	 \node[label=below:$s(k)$] at (p80) {};
	 \node[label=above:{$\alpha(x,k)$}] at (p82) {};
	 \node[label=below:{$\alpha(y,k-1)$}] at (p60) {};
	 \node[label=above:$\tupl c_k$] at ($(p82)!0.5!(p62)$){};
	 \node[label=below:$\tupl c_k$] at ($(p80)!0.5!(p60)$){};
	 \draw [decorate,decoration={brace,amplitude=10pt}]
	 ($(p22)+(0,1)$)--($(p62)+(0,1)$);
         \draw [decorate,decoration={brace,amplitude=10pt}]
	 ($(p120)+(0,-1)$)--($(p80)+(0,-1)$);

      \foreach \j/\k/\styl in {0/0/, 2/2/very thick, 2/0/dashed}{
	 \draw[arw,\styl] (p2\j)--(p4\k);
       \draw[arw,\styl] (p4\j)--(p6\k);
       }
      \foreach \j/\k/\styl in {0/0/, 2/2/, 0/2/dotted,2/0/dashed}{
	 \draw[arw,\styl] (p8\j)--(p6\k);
      }
      \foreach \j/\k/\styl in {0/0/very thick, 2/2/, 2/0/dashed}{
	 \draw[arw,\styl] (p8\j)--(p10\k);
	 \draw[arw,\styl] (p10\j)--(p12\k);
      }
    \end{tikzpicture}
  \end{center}
    \caption{The situation in the first paragraph of the proof of
    Lemma~\ref{lemInduction3}. As before, the images
      of $q$ and $s$ are in bold. The label $\tupl f_k$ is depicted as a dotted
      edge.}\label{figInduction3-start}
\end{figure}

Let us choose $\tupl e\in A^{p_k}$ and $f,c\in A$ so that
$\tupl f_k=\tupl ef$ and $\tupl c_k=\tupl ec$.
Define the new relation $R\subset A\times B_{k}\times B_{k-1}$ as follows:
  \[
    R=\{(x,\tupl y,\tupl z)\colon \exists \tupl t\in B_{k-1},\tupl v\in B_k,\,(\tupl c_k,\tupl y,\tupl
  t), (\tupl ex,\tupl v,\tupl t),(\tupl c_k,\tupl v,\tupl z)\in E_k\}.
  \]
  Since $\algA$ is idempotent, the relation $R$ is a subuniverse of a power of $\algA$.
  Moreover, $R$ contains the tuples $(c,\alpha(x,k),\alpha(x,k-1))$,
  $(c,\alpha(y,k),\alpha(y,k-1))$, and $(f,\alpha(x,k),\alpha(y,k-1))$ (the last tuple is witnessed
  by $(\tupl c_k,\alpha(x,k),\alpha(x,k-1))$, $(\tupl ef,\alpha(y,k),\alpha(x,k-1))$,
  $(\tupl c_k,\alpha(y,k),\alpha(y,k-1))\in E_k$).

\begin{figure}
  \begin{center}
    \begin{tikzpicture}

     	 \foreach \i/\k in {2/0,4/1}{
	   \foreach \j in {0,2}{
	       \node[vert] (p\i\j) at (\i,\j){};
	     }
	 }
	   \foreach \j in {0,2}{
	       \node[vert, right of=p4\j,node distance=2.5cm] (p6\j){};
	       \node[vert, right of=p6\j,node distance=3.5cm] (p8\j){};
	       \node[vert, right of=p8\j,node distance=2cm] (p10\j){};
	       \node[vert, right of=p10\j,node distance=2cm] (p12\j){};
	     }
          \draw[rounded corners] ($(p60)-(1.2,1)$) rectangle
	   ($(p62)+(1.2,1)$) {};
	   \draw[rounded corners] ($(p80)-(0.8,1)$) rectangle
	   ($(p82)+(0.8,1)$) {};
	   \foreach \i in {2,4,10,12}{
	      \draw[rounded corners] ($(p\i0)-(0.6,1)$) rectangle  ($(p\i2)+(0.6,1)$) {};
	      }

	 \node[label=above:$\tupl a_0$] at (p22) {};
	 \node[label=below:$\tupl b_n$]at (p120) {};

	 \node at ($(p42)+(0,2)$) {$Q$};
	 \node at ($(p100)+(0,-2)$) {$S$};
	 \node at ($(p60)+(0,-1.5)$) {$B_{k-1}$};
	 \node at ($(p80)+(0,-1.5)$) {$B_{k}$};

	 \node[label=above:$q(k-1)$] at (p62) {};
	 \node[label=below:$s(k)$] at (p80) {};
	 \draw [decorate,decoration={brace,amplitude=10pt}]
	 ($(p22)+(0,1)$)--($(p62)+(0,1)$);
         \draw [decorate,decoration={brace,amplitude=10pt}]
	 ($(p120)+(0,-1)$)--($(p80)+(0,-1)$);

      \foreach \j/\k/\styl in {0/0/, 2/2/very thick, 2/0/dashed}{
	 \draw[arw,\styl] (p2\j)--(p4\k);
       \draw[arw,\styl] (p4\j)--(p6\k);
       }
      \foreach \j/\k/\styl in {0/0/, 2/2/, 2/0/dotted}{
	 \draw[arw,\styl] (p8\j)--(p6\k);
      }
      \foreach \j/\k/\styl in {0/0/very thick, 2/2/, 2/0/dashed}{
	 \draw[arw,\styl] (p8\j)--(p10\k);
	 \draw[arw,\styl] (p10\j)--(p12\k);
      }

	 \node[label=above:$f$] at ($(p80)!0.5!(p62)$){};
	 \node[label=above:$c$] at ($(p82)!0.5!(p62)$){};
	 \node[label=below:$c$] at ($(p80)!0.5!(p60)$){};

    \end{tikzpicture}
  \end{center}
    \caption{Replacing $E_k$ by $E_k^\star$ in proof of
    Lemma~\ref{lemInduction3}.}\label{figInduction3-Ekprime}
\end{figure}
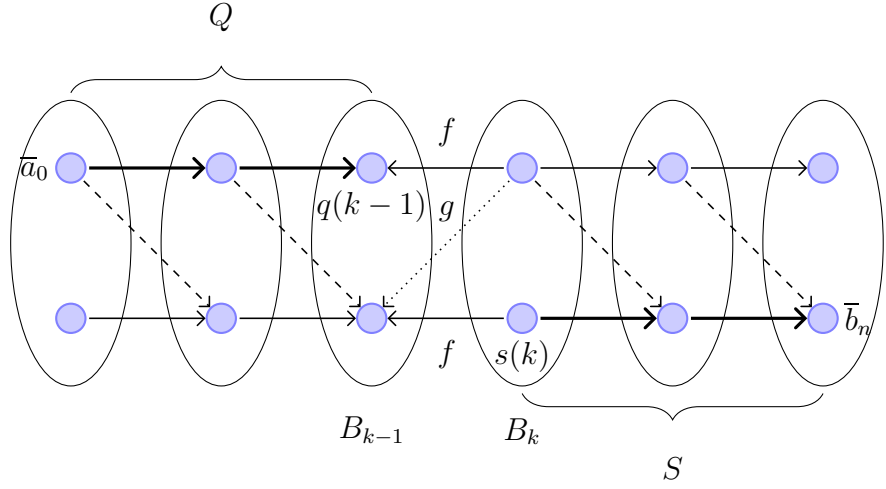
Consider now the $(\tupl m;p_1,\dots,1,\dots,p_n)$-ary testing pattern digraph
$G^\star=\TPG(\hat a',\hat b',\hat c^\star,\hat d^\star)$ where $\hat c^\star$ is $\hat
c$ with $\tupl c_k$ replaced by $c$ and $\hat d^\star$ is $\hat d'$ with $\tupl
d_k$ replaced by $f$. For the most part, this digraph is identical to $\TPG(\hat
a,\hat b,\hat c,\hat d)$, only the edges from $B_{k}$ to $B_{k-1}$ have changed
(see Figure~\ref{figInduction3-Ekprime}). We have $E_k^\star\subset R$ because
the three generators of $E_k^\star$ lie in $R$ (see previous paragraph).

Using $\HasPath(\tupl  m;p_1,\dots,1,\dots,p_n)$ on
$G^\star$, we get a $P$-shaped path $p^\star$
from $\tupl a_0$ to $\tupl b_n$. Since $E_k^\star\subset R$,
there exist $\tupl t\in B_{k-1},\tupl
  v\in B_k$ so that $(p^\star(k),\tupl
  t)$, $(\tupl v,\tupl t)$, and $(\tupl v,p^\star(k-1))$ are all solid edges in the original graph $\TPG(\hat a,\hat b,\hat
  c,\hat d)$. One
  application of part~(\ref{itmPQRS3}) of Lemma~\ref{lemPQRS} with $Q^+$ the
  prefix of $P$ from 0 to $k$ and $S^+$ the suffix of $P$ from $k-1$ to $n$
  (i.e. one edge longer than the original $Q$ and $S$) with $q^+(i)=p^\star(i)$ for
  $i\in \{0,\dots,k-1\}$, $q^+(k)=\tupl v$, and $s^+(i)=p^\star(i)$ for
  $i\in\{k,\dots,n\}$, $s^+(k-1)=\tupl t$ gives us that $q^+$ and $s^+$ can be extended to a
  $J\times P$-shaped walk $\beta$ from $\tupl a_0$ to $\tupl b_n$ (see
  Figure~\ref{figInduction3-end}) in the original $G$.

  It remains to use $\beta$ and Lemma~\ref{lemMinGen} to conclude that $G$ is equal
  to the testing pattern digraph $G^-=\TPG(\hat{a}^-,\hat{b}^-,\hat{c},\hat{d}^-)$
  where $\tupl a_i^-=\beta(x,i)$ and $\tupl b_i^-=\beta(y,i)$ for each
  $i$. Crucially, with $\beta$ in hand we can choose $\tupl d_k^-=\tupl
  c_k$. In other words, it turns out that all edges of the original
  $G$ between $B_{k-1}$ and $B_k$ are solid. But then we can easily connect the
  walks $q$ and $s$ from the first paragraph of this proof into a $P$-shaped
  walk in $G$ and are done.
\end{proof}

 \begin{figure}
  \begin{center}
    \begin{tikzpicture}
     	 \foreach \i/\k in {2/0,4/1}{
	   \foreach \j in {0,2}{
	       \node[vert] (p\i\j) at (\i,\j){};
	     }
	 }
	   \foreach \j in {0,2}{
	       \node[vert, right of=p4\j,node distance=2.5cm] (p6\j){};
	       \node[vert, right of=p6\j,node distance=2.5cm] (p8\j){};
	       \node[vert, right of=p8\j,node distance=2cm] (p10\j){};
	       \node[vert, right of=p10\j,node distance=2cm] (p12\j){};
	     }
          \draw[rounded corners] ($(p60)-(1.2,1)$) rectangle
	   ($(p62)+(1.2,1)$) {};
	   \draw[rounded corners] ($(p80)-(0.8,1)$) rectangle
	   ($(p82)+(0.8,1)$) {};
	   \foreach \i in {2,4,10,12}{
	      \draw[rounded corners] ($(p\i0)-(0.6,1)$) rectangle  ($(p\i2)+(0.6,1)$) {};
	      }

	 \node[label=above:$\tupl a_0$] at (p22) {};
	 \node[label=below:$\tupl b_n$]at (p120) {};

	 \node at ($(p60)+(0,-1.5)$) {$B_{k-1}$};
	 \node at ($(p82)+(0,1.5)$) {$B_{k}$};

      \foreach \j/\k/\styl in {0/0/, 2/2/very thick, 2/0/dashed}{
	 \draw[arw,\styl] (p2\j)--(p4\k);
       \draw[arw,\styl] (p4\j)--(p6\k);
       }
      \foreach \j/\k/\styl in {0/0/, 2/2/, 2/0/dotted}{
	 \draw[arw,\styl] (p8\j)--(p6\k);
      }
      \foreach \j/\k/\styl in {0/0/very thick, 2/2/, 2/0/dashed}{
	 \draw[arw,\styl] (p8\j)--(p10\k);
	 \draw[arw,\styl] (p10\j)--(p12\k);
      }
	 \node at ($(p22)!0.5!(p82)+(0,2)$) {$Q^+$};
	 \node at ($(p120)!0.5!(p60)+(0,-2)$) {$S^+$};
	 \node[label=below:$p^\star(k-1)$] at (p62) {};
	 \node[label=below:$\tupl v$] at (p82) {};
	 \node[label=below:$p^\star(k)$] at (p80) {};
	 \node[label=below:$\tupl t$] at (p60) {};
	 \draw [decorate,decoration={brace,amplitude=10pt}]
	 ($(p22)+(0,1)$)--($(p82)+(0,1)$);
         \draw [decorate,decoration={brace,amplitude=10pt}]
	 ($(p120)+(0,-1)$)--($(p60)+(0,-1)$);

      \foreach \j/\k/\styl in {0/0/, 2/2/, 2/0/dashed}{
	 \draw[arw,\styl] (p2\j)--(p4\k);
       \draw[arw,\styl] (p4\j)--(p6\k);
       }
      \foreach \j/\k/\styl in {0/0/, 2/2/, 2/0/}{
	 \draw[arw,\styl] (p8\j)--(p6\k);
      }
      \foreach \j/\k/\styl in {0/0/, 2/2/, 2/0/dashed}{
	 \draw[arw,\styl] (p8\j)--(p10\k);
	 \draw[arw,\styl] (p10\j)--(p12\k);
      }
    \end{tikzpicture}
  \end{center}
    \caption{Going back to $E_k$ and finishing the proof of
    Lemma~\ref{lemInduction3}. Note that we have three solid edges now.}\label{figInduction3-end}
\end{figure}

\begin{proof}[Proof of Theorem~\ref{thmDimensions}] Start with the tuples
  $\tupl m =(1, 1, \dots, 1)$, $\tupl p = (1, \dots, 1)$, and the hypothesis
  that $\HasPath_P(\tupl m; \tupl p)$ holds. From the definition of testing
  pattern digraphs, we
  know that if the $i$-th edge of $P$ is dashed, then the value of $p_i$ does
  not affect $G$, so we can immediately set any such $p_i$ to $|A|^2$ and
  keep the $\HasPath_P$ property. Next, repeatedly apply
  Lemmas~\ref{lemInduction1}, \ref{lemInduction2}, and \ref{lemInduction3} to
  increase the entries of $\tupl m$ and $\tupl p$ until we get that $\algA$
  satisfies $\HasPath_P(\tupl {|A|^2};\tupl {|A|^2})$. From this it
follows that $\algA$ satisfies $M(P)$.
\end{proof}

\section{Conclusion}
It would be nice if the techniques introduced in proving Theorem~\ref{thmDimensions} could be extended to handle conditions described by some graph (or structure) other than a path and so we ask whether this is the case. One prime example that has been considered is that of a ternary minority operation, i.e., an operation $m(x,y,z)$ that satisfies the equations
\[
m(y,x,x) \approx m (x,y,x) \approx m(x,x,y) \approx y.
\]
While this condition is quite similar to that of having a Maltsev term, it
turns out that there is no chance of producing an efficient algorithm to decide
if a finite idempotent algebra $\algA$ has such a term operation that is based
on some variant of the ``local to global'' term method.  This follows from the
construction, by Dmitriy Zhuk, of a sequence of finite idempotent algebras
$\algA_n$, for $n > 1$, such that for every subset of $\algA_n$ of size $n$,
$\algA_n$ has a term operation that behaves as a minority operation on the
subset, but $\algA_n$ as a whole does not have a minority term
operation~\cite{minority-report}. The
complexity of determining if a finite idempotent algebra has such a term
operation remains open, but recently, the authors, in collaboration with J.
Opr\v{s}al, have shown that this problem is in NP~\cite{minority-report}.

One can also consider the Maltsev conditions of being congruence meet-semidistributive or congruence join-semidistributive.  They are the unions of  sequences of strong linear Maltsev conditions (\cite{kearnes-kiss-book}). We ask whether testing for these strong linear Maltsev conditions can be carried out by polynomial time algorithms for finite idempotent algebras.
It was noted in the introduction that some Maltsev conditions that are not strong, such as being congruence distributive or congruence modular, can also be tested by polynomial time algorithms, for finite idempotent algebras.  We ask whether our techniques can be extended to handle some interesting class of Maltsev conditions that are not strong.

All of the Maltsev conditions studied in this paper are strong, linear, and idempotent (these are called special Maltsev conditions in \cite{Ho-McK}).  As far as we know, for any special Maltsev condition $M$, the problem of deciding if a given finite idempotent algebra satisfies $M$ is in P, and we conjecture that this will always be the case.  An earlier stronger version of this conjecture held that there would be a polynomial time algorithm based on a ``local to global'' term result along the lines of our Theorem~\ref{thmDimensions}, but the case of the minority term Maltsev condition falsified it.


Finally, one can also consider the related problems for finite relational structures.  Namely, given a (strong/linear/idempotent) Maltsev condition $M$ one can ask whether a given finite relational structure $\algB$ has polymorphisms that witness the satisfaction of $M$.  It is known that for strong linear idempotent Maltsev conditions $M$ that imply congruence meet-semidistributivity, this problem can be solved by a polynomial time algorithm.  Recently Hubie Chen and Benoit Larose have produced some significant results related to this class of problems \cite{Chen-Larose}.
\section{Acknowledgments}
A. Kazda was supported by European Research Council under the European Unions
Seventh Framework Programme (FP7/2007-2013)/ERC grant agreement no. 616160 and
the Charles University PRIMUS/SCI/12 and UNCE/SCI/022 grants.

M. Valeriote was supported by a grant from the Natural Sciences and Engineering
Research Council of Canada.
\bibliography{ua}
\end{document}